\renewcommand{\algocf@captiontext}[2]{#1\algocf@typo. \AlCapFnt{}#2} 
\def\@algocf@capt@plain{top}
\renewcommand{\algocf@makecaption}[2]{%
  \addtolength{\hsize}{\algomargin}%
  \sbox\@tempboxa{\algocf@captiontext{#1}{#2}}%
  \ifdim\wd\@tempboxa >\hsize
    \hskip .5\algomargin%
    
    \parbox[t]{\hsize}{\algocf@captiontext{#1}{#2}}
  \else%
    \global\@minipagefalse%
    \hbox to\hsize{\box\@tempboxa}
  \fi%
  \addtolength{\hsize}{-\algomargin}%
}
\def\T{{ \mathrm{\scriptscriptstyle T} }}
\begin{document}




\markboth{Yang Wang \and Zhangsheng Yu Ph.D}{Time-varying effects for panel count data}

\title{A kernel regression model for panel count data with time-varying coefficients}

\author{Yang Wang}
\affil{Department of Statistics, School of Mathematical Sciences and SJTU-Yale Joint Center for Biostatistics, Shanghai Jiao Tong University, Shanghai, China.
\email{wy910028@sjtu.edu.cn} }
\author{Zhangsheng Yu Ph.D}
\affil{Department of Statistics, School of Mathematical Sciences and SJTU-Yale Joint Center for Biostatistics, Shanghai Jiao Tong University, Shanghai, China.
\email{yuzhangsheng@sjtu.edu.cn}}

\maketitle

\begin{abstract}
For the conditional mean function of panel count model with time-varying coefficients, we propose to use local kernel regression method for estimation. Partial log-likelihood with local polynomial is formed for estimation. Under some regularity conditions, strong uniform consistency rates are obtained for the local estimator. At target time point, we show that the local estimator converges in distribution to normal distribution. The baseline mean function estimator is also shown to be consistent. Simulation studies show that the time-varying coefficient estimator is close to the true value, the empirical coverage probabilities of the confidence interval is close to the nominal level. We also applied the proposed method to analyze a clinical study on childhood wheezing.
\end{abstract}
 
\begin{keywords}
 kernel weight, local partial log-likelihood, local kernel estimating equation.
\end{keywords}

\section{Introduction}

Panel count data occurred when events are observed at finite fixed time points and the visit times vary from subject to subject, and the exact event times between two consecutive observation times are unknown. In reality, panel count data are often encountered in clinical, demographical and industrial researches. For example, in an observational study on childhood asthma (\citet{Tepper:2008}), the number of wheezing episodes experienced by each child between two consecutive interviews were collected by phone call. The event number may be greater than one, and the exact time of wheezing occurrence was unknown. The wheezing event time analysis should be considered as the panel count data type. Meanwhile, the risk factors' effect on the panel count outcome may vary over time. Therefore, it is desirable to study the panel count model with nonparametric time-varying coefficients.

In the past three decades, there have been extensive researches to study the proportional mean model for panel count data. Generally, there are two main approaches, one is pseudo-likelihood estimation method, and the other is the estimating equation approach. For the likelihood method, pseudo-likelihood function was constructed based on the nonhomogeneous Poisson process assumptions, see \citet{Zhang:2002}, \citet{WellZh:2007}. \citet{Zhu:2018} developed a likelihood-based semi-parametric regression model for panel count data under the same assumptions. \citet{HuaZh:2014} proposed sieve maximum likelihood method under the Gamma-Frailty inhomogeneous Poisson process assumption. For the estimating equation approach, \citet{HuSun:2003}, \citet{Sun:2007} and \citet{Li:2010} discussed estimating equation approach to analyze the semi-parametric regression model for panel count data with correlated observation times. \citet{He:2007}, \citet{Li:2011} and \citet{Li:2015} proposed estimating equation approach for regression analysis of multivariate panel count data. All above methods focused on the parametric covariate effect estimation which lead to bias estimators when the covariate effect changes over time. Therefore, statistical methods dealing with time-varying coefficient for panel count data are much desired.

In this paper, we focus on the nonparametric time-varying coefficient estimation for panel count data. For nonparametric regression model, there are two main approaches, kernel estimation and spline method, generally used to study for survival data in recent researches. For example, \citet{CaiSun:2003}, \citet{Tian:2005}, \citet{CaiFan:2007}, \citet{YuLin:2010} and \citet{Lin:2016} discussed kernel-weighted likelihood method for Cox model with time-varying effects. \citet{BuSa:2011}, \citet{Aris:2013} and \citet{Aris:2014} proposed B-spline methods for time-varying effects model in survival data analysis. Nevertheless, limited work has been done in nonparametric panel count model. \citet{ZTY:2018} investigated B-splined pseudo-likelihood method for time-varying coefficients model of panel count data. However, they only provided the asymptotic distribution for an integral of the estimator instead of the proposed estimator. Furthermore, the robustness of splined estimation was poor depending on the knot numbers. To make up for these deficiencies, we employ the kernel estimated approach to study time-varying coefficients panel count model. Under the assumption of nonhomogeneous Poisson process, we construct kernel-weighted local partial log-likelihood for estimation and provide the asymptotic properties of the coefficient estimators.

In the following, $\S$\,2 presents time-varying coefficients mean model, and the kernel-weighted local partial log-likelihood for estimation. $\S$\,3 derives the asymptotic theoretical properties of estimators based on modern empirical process theories. $\S$\,4 describes the numerical results obtained from simulation studies to exam the proposed model. $\S$\,5 applies the proposed approach to a child wheeze study. $\S$\,6 concludes the paper with a discussion. The technical details are presented in Appendices.

\section{The mean function model and local partial log-likelihood estimation}

\subsection{The conditional mean function model}

We first introduce some notations. Let $\{N_i(t),t\geq 0\}$ be a counting process of the cumulative number of events up to time $t, 0 \leq t \leq \tau$, and $\tau$ is the maximum follow up time. Without loss of generality, we assume that $N_i(0)=0,i=1,2,\ldots, n$. For subject $i$, the patient is followed at time $\{T_{il}:0<T_{i1}<T_{i2}<\cdots<T_{i k_i}<\infty \}$, where $k_i$ and $T_{il}$ are random. We denote $\{O_i(t),t\geq 0\}$ as the observation process, which is a point process $O_{i}(t)=\sum_{l=1}^{k_{i}}I(T_{il}\leq t)$, $t\geq 0$, representing the cumulative visit numbers up to time $t$. Here, $I(\cdot)$ is the indicator function. Let $o_{i}(t)=O_{i}(t)-O_i(t-)$, so that $o_{i}(t)$ denotes whether subject $i$ has a visit at time $t$. Suppose that $C_i, i=1,2,\ldots, n$, are censoring times. And $N_i(T_{il})$ is not observed when $C_i<T_{il}<\tau$. Also let $\{ Z_i, i=1,\ldots, n\}$ be $d$-dimensional covariates. In this paper, for simplicity, we consider $d=1$. Suppose that given $Z_i$, the mean function of $N_i(t)$ is
\begin{equation}
\label{MF1}
E\{N_i(t)\mid Z_i=z_i\}=\mu_0(t)\exp(\beta(t)z_i),\quad t\geq 0,
\end{equation}
where the baseline function $\mu_0(t)$ is unspecified, and $\beta(t)$ is an unknown function. In this study, we assume that $\{N_i(t),O_i(t),C_i,Z_i\},i=1,\ldots, n$, are independent and identically distribution. Furthermore, we assume that $N_i(t),O_i(t)$ and $C_i$ are independent, given the covariate $Z_i$. 

\subsection{Local kernel estimation conditional on observation process}

As the information about recurrent process $N_i(t)$ can be observed at visit times, we define a new counting process $\widetilde{N}_i(t)$, with respect to subject $i$, conditional on observation process:
\begin{equation}
\label{M1}
\widetilde{N}_i(t)=\int_{0}^{t}N_i(u)dO_i(u),\quad t\geq 0.
\end{equation}
The defined process only jumps at the observation times $\{T_{i,l}, l=1,\ldots, k_i\}$, and the jump size is $N_i(T_{i,l})$. Then, conditional on  the observation process $O_i(t)$ and covariate  $Z_i$, the mean of $d\widetilde{N}_i(t)$ is as follows:
\begin{equation}
\label{MF2}
E\{d\widetilde{N}_i(t)\mid Z_i=z_i;O_i(u),0<u\leq t\}=\mu_0(t)\exp(\beta(t)z_i)dO_i(t).
\end{equation}

Suppose that $d\widetilde{N}_i(t)$ is nonhomogeneous Poisson process, we can construct the logarithm of partial likelihood function with observed information over $[0,\tau]$ $(\tau>0)$ by employing similar techniques presented in work of \citet{LawNad:1995} and \citet{HuSun:2003}, as follows:
\begin{equation}
\label{L1}
pl_{n}(\beta(u))=n^{-1}\sum_{i=1}^{n}\int_{0}^{\tau}I(C_i\geq u)\big \{\beta(u)z_i-\log n^{-1}\sum_{j=1}^{n}I(C_j\geq u)\exp(\beta(u)z_j)o_j(u)\big \}d\widetilde{N}_i(u).
\end{equation}

To estimate the time-varying coefficient, we employ the kernel likelihood approach. For each fixed time point $t$, using Taylor expansion, we approximate the $\beta(u)$ with the $p$th-order polynomial as:
\begin{equation}
\label{a1}
\beta(u)\approx \beta(t)+\beta'(t)(u-t)+\cdots+\beta^{(p)}(t)(u-t)^{p}/p!,
\end{equation}
Set $\bm{\beta}=(\beta_{0}(t),\beta_{1}(t),\ldots,\beta_{p}(t))^{\T}=(\beta(t),\beta'(t),\ldots,\beta^{(p)}(t)/p!)^{\T}$ and $z_i(\bm{u})=z_i(1,u-t,\ldots,(u-t)^{p})^{\T}$. Let $K(\cdot)$ be a kernel function which can down weight the likelihood contribution of remote time points, and let $h$ be the bandwidth that can regulate the local neighborhood sizes. Then, by inserting localizing weights, with the local polynomial equation \eqref{a1}, we obtain the local partial log-likelihood:
\begin{align}
\label{L2}
&\mathcal{L}_{n}(\bm{\beta})=\nonumber\\
&n^{-1}\sum_{i=1}^{n}\int_{0}^{\tau}K_h(u-t)I(C_i\geq u)\big \{\bm{\beta}^{\T}z_i(\bm{u})-\log n^{-1}\sum_{j=1}^{n}I(C_j\geq u)\exp(\bm{\beta}^{\T}z_j(\bm{u}))o_j(u)\big \}d\widetilde{N}_i(u),
\end{align}
where $K_h(\cdot)=h^{-1}K(\cdot/h)$. 

Let $\bm{\widehat{\beta}}$ be the maximizer of \eqref{L2} with respect to $\bm{\beta}$. Then $\widehat{\beta}(t)=\widehat{\beta}_{0}(t)$ is the local kernel partial maximum likelihood estimator of $\beta(t)$, which is the first component of vector $\bm{\widehat{\beta}}$. 

To obtain the maximizer of \eqref{L2}, we introduce some additional notations. Let 
\begin{equation}
\label{S1}
\widetilde{S}_{n,j}(u,\bm{\beta})=n^{-1}\sum_{i=1}^{n}I(C_i\geq u)\exp(\bm{\beta}^{\T}z_i(\bm{u}))o_i(u)z_i(\bm{u})^{\otimes j},\quad j=0,1,2.
\end{equation}\\
Then, \eqref{L2} can be modified as follows:
\begin{equation}
\label{L3}
\mathcal{L}_{n}(\bm{\beta})=n^{-1}\sum_{i=1}^{n}\int_{0}^{\tau}K_h(u-t)I(C_i\geq u)\big \{\bm{\beta}^{\T}z_i(\bm{u})-\log \widetilde{S}_{n,0}(u,\bm{\beta})\big \}d\widetilde{N}_i(u).
\end{equation}\\
And we can derive the local kernel estimating equation,
\begin{equation}
\label{L4}
\mathcal{L}'_{n}(\bm{\beta})=n^{-1}\sum_{i=1}^{n}\int_{0}^{\tau}K_h(u-t)I(C_i\geq u)\{z_i(\bm{u})-\widetilde{S}_{n,1}(u,\bm{\beta})/\widetilde{S}_{n,0}(u,\bm{\beta})\}d\widetilde{N}_{i}(u).
\end{equation}
which is the gradient of $\mathcal{L}_{n}(\bm{\beta})$. Again, the Hessian matrix of $\mathcal{L}_{n}(\bm{\beta})$ is formed as 
\begin{align}
\label{L5}
&\mathcal{L}''_{n}(\bm{\beta})=\nonumber\\
&-n^{-1}\sum_{i=1}^{n}\int_{0}^{\tau}K_h(u-t)I(C_i\geq u)\{\widetilde{S}_{n,2}(u,\bm{\beta})/\widetilde{S}_{n,0}(u,\bm{\beta})-(\widetilde{S}_{n,1}(u,\bm{\beta})/\widetilde{S}_{n,0}(u,\bm{\beta}))^{\otimes 2}\}d\widetilde{N}_i(u).
\end{align}
By Cauchy-Schwarz inequality, we can check that the right-hand side of \eqref{L5} is negative, as $n\to \infty$. Thus, $\mathcal{L}_{n}(\bm{\beta})$ is strictly concave with respect to $\bm{\beta}$. Hence, there is a unique maximizer of the local likelihood $\mathcal{L}_{n}(\bm{\beta})$. Then, using the Newton-Raphson algorithm, we can get the local kernel estimator $\bm{\widehat{\beta}}$.
Here, the $(j+1)$th step of Newton-Raphson algorithm is 
\[
\bm{\widehat{\beta}}^{(j+1)}=\bm{\widehat{\beta}}^{(j)}-\mathcal{L}'_{n}(\bm{\widehat{\beta}}^{(j)})/\mathcal{L}''_{n}(\bm{\widehat{\beta}}^{(j)}),
\]
where $\bm{\widehat{\beta}}^{(j)}$ is the value at $j$th iteration.

After obtaining the $\widehat{\beta}(t)=\widehat{\beta}_{0}(t)$ at each observation time, we can construct the Breslow type estimator $\widehat{\mu}_{0}(t)$ for the baseline mean function $\mu_{0}(t)$ as $\widehat{\mu}_0(t)=\sum_{i=1}^{n}I(C_i\geq t)N_i(t)o_i(t)/\sum_{i=1}^{n}I(C_i\geq t)\exp(\beta(t)z_i)o_i(t)$ (\citet{Breslow:1974} and \citet{Cox:1992}). Substituting $\beta(t)$ by $\widehat{\beta}(t)$, we obtain the baseline estimator 
\begin{equation}
\label{mm}
\widehat{\mu}_0(t,\widehat{\beta}(t))=\sum_{i=1}^{n}I(C_i\geq t)N_i(t)o_i(t)/\sum_{i=1}^{n}I(C_i\geq t)\exp(\widehat{\beta}(t)z_i)o_i(t).
\end{equation}

\section{Asymptotic properties}

\subsection{Strong uniform consistency and asymptotic normality}

In this section, we present the asymptotic theoretical properties of the proposed estimator. For simplicity of presentation, we introduce some notations. Let
$\bm{u}=(1,u,\ldots,u^{p})^{\T}$, $\Omega_{1}=\int K(u) \bm{u}^{\otimes 2}du$, $\Omega_{2}=\int K^{2}(u) \bm{u}^{\otimes 2}du$. Set $H= diag(1,h,\ldots,h^{p})$, $ \bm{u}-\bm{t}=(1,(u-t)/h,\ldots, (u-t)^{p}/h^{p})^{\T}$, and the true value $\bm{\beta}^{\ast}=(\beta(t),\beta'(t)\ldots,\beta^{(p)}(t)/p!)^{\T}$.

$p_1(t \mid z) =pr(C \geq t \mid Z=z), \quad p_2(t\mid z) =pr(o(t)\mid Z=z), \quad \mu(t \mid z) =\mu_0(t)\exp(\beta(t)z)$,

$\sigma(t \mid z) =\mu^{2}_{0}(t)\exp(2\beta(t)z),\quad q_{j}(t)=E(p_1(t \mid z)p_2(t \mid z)\mu(t \mid z)z^{j}), \quad j=0,1,2$.\\
Define
\begin{equation}
\sigma_1(t)=q_{2}(t)-q_1^{2}(t)/q_0(t),\quad
\sigma_2(t)=E(p_1(t \mid z)p_2(t \mid z)(z-q_{1}(t)/q_{0}(t))^{2}\sigma(t \mid z)).
\end{equation}
Let $T=\{t: t\in [0,\tau]\}$.

The following regularity conditions are required for the theorems and lemmas.
\begin{condition}
The kernel function $K(\cdot)\geq 0$ is a symmetric density function with compact support $[-1,1]$, and is bounded variation taking the value as zero at the boundaries;
\end{condition}
\begin{condition}
$N(\cdot)$, $O(\cdot)$ are bounded, $E(N^{2}(\cdot)\mid Z=z)$ is exist and $E(Z^{\lambda})^{1/\lambda}<\infty$, for $2<\lambda<\infty$;
\end{condition}
\begin{condition}
The time-varying coefficient $\beta(t)$ is $(p+1)$th-order continuous differentiable with bounded variation in $T$;
\end{condition}
\begin{condition}
$\mu_0(t)$, $p_1(t \mid z)$, $p_2(t \mid z)$ and $\beta(t)z$ are positive and continuous in $T$;
\end{condition}
\begin{condition}
$q_{0}(t)>0$, $q_{1}(t)$, $q_{2}(t)$, $\sigma_1(t)$ and $\sigma_2(t)$ are continuous, and $\inf \sigma_{1}(t)=M_{1}<\infty$, $\sup q_{1}(t)/q_{0}(t)=M_{2}<1$, $\sup q_{0}(t)=M_{3}<\infty$.
\end{condition}

The above conditions will be used to prove the strong uniform consistency and pointwise asymptotic normality of the proposed estimator. Conditions 1-3 are technical and regularity conditions. Conditions 4 and 5 are necessary for deriving the uniform convergence result. Among them, we assume $p_{1}(t \mid z)> 0$ and $p_{2}(t \mid z)> 0$, which ensure that there is at least one event on each $t\in T$ as $n$ gets large enough. This is crucial to theoretical demonstration of asymptotic properties. Next up, we state the main results of this paper. The detailed proofs are relegated to Appendices.

\begin{theorem}
\label{theorem1}
Under Conditions 1-5, assume that the bandwidth $h$ satisfies the conditions:
\[ h\to 0,\quad nh/\log n\to \infty \quad and \quad h\geq (\log n/n)^{1-2/\lambda} \quad for \quad \lambda> 2,\] 
then there exists a sequence of solutions $\big \{\bm{\widehat{\beta}}=(\widehat{\beta}_{0}(t),\ldots,\widehat{\beta}_{p}(t))^{\T}\big \}$ to equation \eqref{L4}, such that, for each $k=0,\ldots,p$, almost surely
\begin{equation}
\label{them1}
\sup_{t\in T}|\widehat{\beta}_{k}(t)-\beta^{(k)}(t)/k!|=O(h^{-k}\{(\log n/(nh))^{1/2}+h\}) \quad as \quad  n\to \infty.
\end{equation}
Especially, when the local linear approximation is used ($p=1$), we have, almost surely
\begin{equation}
\label{them1.1}
\sup_{t\in T}|\widehat{\beta}(t) - \beta(t)|=O((\log n/(nh))^{1/2}+h) \quad as \quad  n\to \infty.
\end{equation}
\end{theorem}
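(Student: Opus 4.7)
The plan is to adapt standard local-polynomial kernel theory to the panel-count setting, noting that the argument parallels existing work on kernel-weighted Cox regression (e.g.\ \citet{CaiSun:2003}, \citet{Tian:2005}) but must accommodate the conditional mean model \eqref{MF2} involving the observation process $o_i(\cdot)$. The argument proceeds in three steps. First, uniform consistency at parametric rate for the auxiliary sums $\widetilde{S}_{n,j}(u,\bm{\beta})$ defined in \eqref{S1}: over $u \in T$ and $\bm{\beta}$ in a neighborhood of $\bm{\beta}^*$, Conditions 2 and 4 make the relevant function classes VC-type, and a bracketing/Glivenko--Cantelli argument yields $\sup |\widetilde{S}_{n,j} - s_j| = O((\log n/n)^{1/2})$ almost surely, where $s_j$ denotes the population limit obtained by taking expectations, built from the $q_j(t)$ in Condition 5.

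Second, with the rescaling $H = \mathrm{diag}(1, h, \ldots, h^p)$ and the change of variable $v = (u-t)/h$, I would decompose $H^{-1}\mathcal{L}'_n(\bm{\beta}^*)$ into a \emph{bias} arising from the Taylor remainder $\beta(u) - \bm{\beta}^{*\T}(\bm{u}-\bm{t})$, which by Condition 3 is $O(h^{p+1})$ uniformly for $|u-t| \leq h$, and a \emph{stochastic} term consisting of a kernel-weighted centered integral against $d\widetilde{N}_i(u) - \mu(u \mid z_i)o_i(u)du$. For the stochastic term, uniform-in-$t$ almost-sure control at rate $\{\log n/(nh)\}^{1/2}$ is obtained by truncating $Z_i$ at $(n/\log n)^{1/\lambda}$ (Condition 2), applying Bernstein's inequality to the truncated summands, and taking a union bound over an $O(n)$-mesh of $T$ via the bounded-variation of $K$ (Condition 1). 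A direct calculation of the two Bernstein terms shows that the bandwidth lower bound $h \geq (\log n/n)^{1-2/\lambda}$ is precisely the threshold at which the truncation-induced variance is absorbed into the claimed rate. Simultaneously, $-H\mathcal{L}''_n(\bm{\beta})H$ converges uniformly in $t \in T$ and $\bm{\beta}$ in a neighborhood of $\bm{\beta}^*$ to a positive-definite limit proportional to $\mu_0(t)\sigma_1(t)\Omega_1$, using Condition 5 for invertibility.

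Combining these via the Taylor expansion $\mathcal{L}'_n(\widehat{\bm{\beta}}) - \mathcal{L}'_n(\bm{\beta}^*) = \mathcal{L}''_n(\bm{\beta}^\dagger)(\widehat{\bm{\beta}} - \bm{\beta}^*)$, the strict concavity of $\mathcal{L}_n$ established after \eqref{L5} guarantees a unique consistent solution, and inversion yields
\[
H(\widehat{\bm{\beta}} - \bm{\beta}^*) = O\bigl(\{\log n/(nh)\}^{1/2} + h^{p+1}\bigr) \quad \text{uniformly in } t \in T,
\]
almost surely; reading off the $k$-th component and using $h^{p+1-k} \leq h^{1-k}$ for $0 \leq k \leq p$ yields \eqref{them1}, with the local-linear case \eqref{them1.1} being $p=1$, $k=0$. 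The main obstacle is the uniform-in-$t$ almost-sure rate $\{\log n/(nh)\}^{1/2}$ for the kernel-weighted centered empirical process: the $1/h$ factor in $K_h$ rules out a direct uniform law of large numbers and forces the truncation-plus-Bernstein-plus-covering scheme above, with the bandwidth lower bound $h \geq (\log n/n)^{1-2/\lambda}$ being the exact threshold needed for this step to deliver the claimed rate under the moment condition $E(Z^\lambda)^{1/\lambda}<\infty$.
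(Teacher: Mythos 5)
Your proposal is sound and rests on the same technical engine as the paper's proof --- truncation of $Z_i$ at a $\lambda$-dependent level, Bernstein's inequality for the truncated sums, a union bound over a mesh of $T$ (and of the parameter), and the bounded variation of $K$ to pass from increments to the kernel-weighted score; the bandwidth floor $h\geq(\log n/n)^{1-2/\lambda}$ plays exactly the role you identify. Where you genuinely diverge is in the final step that converts control of the score into a rate for $\bm{\widehat{\beta}}$. You Taylor-expand $\mathcal{L}'_n$ about $\bm{\beta}^{\ast}$ and invert the rescaled Hessian, which requires uniform (in $t$ and in a shrinking neighbourhood of $\bm{\beta}^{\ast}$) convergence of $-H^{-1}\mathcal{L}''_n H^{-1}$ to a positive-definite limit and a preliminary consistency step to keep the intermediate point $\bm{\beta}^{\dagger}$ in that neighbourhood; your appeal to strict concavity can close this loop but needs to be spelled out. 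The paper instead follows \citet{Hardle:1988} and \citet{Zhao:1994}: it works componentwise, evaluates $U_{nk}$ at $\alpha_{k}=\pm\epsilon_{k}$ with $\epsilon_{k}=\max\{2w(h),6l_{n}/(\mu_{2k}M_{1})\}$, shows via a Taylor expansion of $E(U_{nk}(\pm\epsilon_{k}))$ that the deterministic drift dominates the a.s.\ fluctuation $l_{n}=O((\log n/(nh))^{1/2})$, and concludes by a sign change that a root lies in $(-\epsilon_{k},\epsilon_{k})$; this one-dimensional monotonicity argument both delivers existence and sidesteps any uniform Hessian control, and the same monotonicity in $\alpha_{k}$ is what makes the discretization in its Lemma~\ref{lemB1} work. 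One further small discrepancy: you claim the bias contribution is $O(h^{p+1})$, whereas the paper's uniform argument only extracts $O(h)$ from the modulus of continuity $w(h)$; since the theorem only asserts the $O(h)$ rate, both suffice, but your sharper claim would require additional justification at the level of uniformity in $t$.
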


The above theorem shows that the proposed estimator is strong uniformly consistent. This indicates the local estimator is uniform asymptotically unbiased as $n\to \infty$. Under more stringent conditions, the strong uniform consistency rate of the proposed estimator is similar to that of \citet{Zhao:1994} and \citet{Claeskens:2003}. In their paper, they discussed the strong uniform convergence rate for the nonparametric location regression problem. Here, the strong uniform consistency of the proposed estimator is derived based on Lemma~\ref{lemB1} and Lemma~\ref{lemB2} presented in Appendix 2. In particular, Lemma~\ref{lemB1} discusses the supremum of the local kernel estimating equation \eqref{L4} under some conditions, which play a crucial role in the proof of Theorem~\ref{theorem1}. The detailed proofs are presented in Appendix 2.

\begin{theorem}
\label{theorem2}
Under Conditions 1-5, assume that the bandwidth $h$ satisfies the conditions:
\[h\to 0, \quad nh\to \infty, \quad and \quad nh^{2p+3} \quad is \quad bounded,\]
then the asymptotic distribution of $\bm{\widehat{\beta}}$ satisfies
\begin{equation}
\label{them2}
(nh)^{1/2}\big\{H(\bm{\widehat{\beta}}-\bm{\beta}^{\ast})-\Omega_{1}^{-1}\bm{b}h^{p+1}\beta^{(p+1)}(t)/(p+1)!\big\}\rightarrow N(0,\sigma_1^{-2}(t)\sigma_2(t)\Omega_{1}^{-1}\Omega_{2}\Omega_{1}^{-1}), 
\end{equation}\\
where $\bm{b}=\int u^{p+1}\bm{u}K(u)du$.
\end{theorem}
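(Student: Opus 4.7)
The plan is to linearize the first-order condition $\mathcal{L}'_n(\widehat{\bm{\beta}})=0$ by Taylor expansion about $\bm{\beta}^{\ast}$, giving $\widehat{\bm{\beta}}-\bm{\beta}^{\ast}=-[\mathcal{L}''_n(\widetilde{\bm{\beta}})]^{-1}\mathcal{L}'_n(\bm{\beta}^{\ast})$ for some intermediate $\widetilde{\bm{\beta}}$, which by Theorem~\ref{theorem1} may be replaced by $\bm{\beta}^{\ast}$ at $o_p(1)$ cost. Pre- and post-multiplying by $H^{-1}$ yields
\[
(nh)^{1/2}H(\widehat{\bm{\beta}}-\bm{\beta}^{\ast}) = -[H^{-1}\mathcal{L}''_n(\bm{\beta}^{\ast})H^{-1}]^{-1}(nh)^{1/2}H^{-1}\mathcal{L}'_n(\bm{\beta}^{\ast}) + o_p(1).
\]
The key algebraic reduction throughout is the identity $H^{-1}z_i(\bm{u})=z_i\bm{v}$ with $\bm{v}=(1,(u-t)/h,\ldots,((u-t)/h)^p)^{\T}$, which, combined with the change of variable $u=t+hv$ so that $K_h(u-t)du=K(v)dv$, puts every local-polynomial kernel sum into a dimensionless, locally stationary form amenable to uniform convergence.

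For the Hessian, set $a_j(t)=E\{p_1(t\mid z)p_2(t\mid z)\exp(\beta(t)z)z^j\}$, so that $q_j(t)=a_j(t)\mu_0(t)$. Uniform convergence of $\widetilde{S}_{n,j}(u,\bm{\beta}^{\ast})$ on $[t-h,t+h]$ (extending the argument behind Theorem~\ref{theorem1}) gives $\widetilde{S}_{n,2}/\widetilde{S}_{n,0}-(\widetilde{S}_{n,1}/\widetilde{S}_{n,0})^{\otimes 2}\to (a_0 a_2 - a_1^2)\bm{v}\bm{v}^{\T}/a_0^2$. Pairing with $E\{I(C\geq u)d\widetilde{N}(u)\}=q_0(u)du$ and using $(a_0 a_2 - a_1^2)\mu_0/a_0 = q_2 - q_1^2/q_0 = \sigma_1(t)$ gives $H^{-1}\mathcal{L}''_n(\bm{\beta}^{\ast})H^{-1}\xrightarrow{P}-\sigma_1(t)\Omega_1$.

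For the score, decompose $\mathcal{L}'_n(\bm{\beta}^{\ast})=A_n+B_n$, where $A_n$ replaces $d\widetilde{N}_i(u)$ by its conditional mean $\mu_0(u)\exp(\beta(u)z_i)dO_i(u)$ and $B_n$ collects the residual $dM_i(u)=d\widetilde{N}_i(u)-\mu_0(u)\exp(\beta(u)z_i)dO_i(u)$. In $A_n$, Taylor expand $\exp(\beta(u)z_i)=\exp(\bm{\beta}^{\ast\T}z_i(\bm{u}))\{1+\beta^{(p+1)}(t)z_i(u-t)^{p+1}/(p+1)!+O(h^{p+2})\}$ by Condition 3. The leading factor $\exp(\bm{\beta}^{\ast\T}z_i(\bm{u}))$ contributes \emph{exactly zero} after integration against $z_i(\bm{u})-\widetilde{S}_{n,1}/\widetilde{S}_{n,0}$, as an algebraic identity from the definitions of $\widetilde{S}_{n,0},\widetilde{S}_{n,1}$. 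The $(u-t)^{p+1}$ remainder, after the same change of variable and the simplification $E[\{z-q_1/q_0\}z\mu_0\exp(\beta(t)z)p_1 p_2] = \sigma_1(t)$, yields $(nh)^{1/2}H^{-1}A_n = (nh)^{1/2}h^{p+1}\sigma_1(t)\bm{b}\beta^{(p+1)}(t)/(p+1)! + o_p(1)$ with $\bm{b}=\int u^{p+1}\bm{u}K(u)du$, bounded since $nh^{2p+3}=O(1)$. For $B_n$, replace $\widetilde{S}_{n,1}/\widetilde{S}_{n,0}$ by $q_1(t)/q_0(t)$ at $o_p((nh)^{-1/2})$ cost and apply the Lindeberg--Feller CLT to the $n$ i.i.d.\ summands; a second-moment calculation under the Poisson assumption on $d\widetilde{N}_i$ gives $(nh)\mathrm{Var}\{H^{-1}B_n\}\to\sigma_2(t)\Omega_2$, with the Lindeberg condition verified from the moment bound $E(Z^\lambda)^{1/\lambda}<\infty$ in Condition 2 and $nh\to\infty$.

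Combining by Slutsky, the $A_n$ contribution becomes $\sigma_1(t)^{-1}\Omega_1^{-1}\cdot\sigma_1(t)\bm{b}h^{p+1}\beta^{(p+1)}(t)/(p+1)! = \Omega_1^{-1}\bm{b}h^{p+1}\beta^{(p+1)}(t)/(p+1)!$, matching the centring in the statement, and the $B_n$ contribution is asymptotically Gaussian with covariance $(\sigma_1(t)^{-1}\Omega_1^{-1})\sigma_2(t)\Omega_2(\sigma_1(t)^{-1}\Omega_1^{-1}) = \sigma_1^{-2}(t)\sigma_2(t)\Omega_1^{-1}\Omega_2\Omega_1^{-1}$, as claimed. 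The delicate step I expect to require the most care is the variance calculation for $B_n$: correctly identifying the conditional second moment of $dM_i(u)$ under the nonhomogeneous Poisson assumption so that $\sigma(t\mid z)=\mu_0^{2}(t)\exp(2\beta(t)z)$, rather than $\mu(t\mid z)$, appears in $\sigma_2(t)$, together with controlling the $o_p((nh)^{-1/2})$ error from substituting $q_1(t)/q_0(t)$ for $\widetilde{S}_{n,1}/\widetilde{S}_{n,0}$ inside $B_n$; both rely on uniform convergence rates of the same type as those underlying Theorem~\ref{theorem1}.
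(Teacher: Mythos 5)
Your proposal is correct in outline and arrives at the same asymptotic representation, but it takes a genuinely different route from the paper. You linearize the score equation $\mathcal{L}'_n(\bm{\widehat{\beta}})=0$ around $\bm{\beta}^{\ast}$ and invert the normalized Hessian, then apply the Lindeberg--Feller central limit theorem pointwise to the martingale-residual part $B_n$. The paper instead exploits the strict concavity of $\mathcal{L}_n$ noted after \eqref{L5}: it expands the local likelihood process $X_n(\gamma_n\bm{\alpha},\tau)$ into a linear-plus-quadratic form (the decomposition \eqref{B37} into $A_n$ and $U_n$ is the same compensator/residual split as your $A_n$/$B_n$), applies the Quadratic Approximation Lemma of Fan and Gijbels to obtain \eqref{B40} directly, and establishes normality of $U_{n,1}(\tau)$ via the functional central limit theorem of Pollard applied to the empirical processes of Lemma~\ref{lemB3}, together with the Strong Representation Theorem and Lemma~\ref{lemA2}. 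Your route is more elementary and entirely adequate for the pointwise-in-$t$ statement; the paper's route avoids justifying the vector mean-value expansion and the invertibility of the Hessian at an intermediate point, which concavity gives for free. The one place where your argument is thinner than the paper's is exactly the step you flag: replacing $\widetilde{S}_{n,1}/\widetilde{S}_{n,0}$ by $q_1(t)/q_0(t)$ inside the stochastic integral against $dM_i(u)$ at cost $o_p((nh)^{-1/2})$. Because $\widetilde{S}_{n,1}/\widetilde{S}_{n,0}$ depends on the whole sample (including subject $i$ at time $u$), the resulting term is not a sum of independent centred variables, so its order cannot be read off from a naive variance computation plus uniform consistency of the ratio; the paper's process-level weak convergence, strong representation, and Lemma~\ref{lemA2} are precisely the machinery that closes this step, and you would need either to import it or to supply a Lenglart/conditional-variance bound. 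Your remaining computations (the exact algebraic cancellation of the leading term of $A_n$, the identity $(a_0a_2-a_1^2)\mu_0/a_0=\sigma_1(t)$, and the final Slutsky assembly of bias and covariance) match the paper's.
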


The result in above theorem demonstrates the asymptotic normality of the proposed estimator, under general conditions. The $\bm{\widehat{\beta}}$ converges in the optimal rate of kernel estimators and analogous to the spline estimator. The bias is of order $h^{p+1}$ and related to the $(p+1)$-derivative of real function $\beta(t)$. Hence, it tends to zero when the bandwidth gets to zero. The theorem also gives the joint asymptotic normality of the estimator for derivatives. Particularly, the variance and bias of $\widehat{\beta}^{(r)}(t)=\widehat{\beta}_{r}(t)$ can be obtained by the $r$th component of \eqref{them2}. The detailed proof is presented in Appendix 2. We also present the two lemmas which are key to the proof of Theorem~\ref{theorem1} and Theorem~\ref{theorem2} in Appendix 1. When the local linear approximation is used ($p=1$), we have the following corollary:

\begin{corollary}
\label{coro1}
Under Conditions 1-5, and assume that the bandwidth $h$ satisfies the conditions: 
\[h\to 0, \quad nh\to \infty, \quad and \quad nh^{5} \quad is \quad bounded,\]
then the asymptotic distribution of $\widehat{\beta}(t)$ satisfies
\begin{equation}
\label{cor1}
(nh)^{1/2}\big \{\widehat{\beta}(t)-\beta(t)-2^{-1}\mu_2h^{2}\beta''(t)\big\}\to N(0,\nu_0\sigma_1^{-2}(t)\sigma_2(t)), 
\end{equation}\\
where $\mu_2=\int u^{2}K(u)du$, $\nu_0=\int K^{2}(u)du.$
\end{corollary}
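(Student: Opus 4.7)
The plan is to derive Corollary \ref{coro1} as a direct specialization of Theorem \ref{theorem2} with $p=1$, by computing the relevant kernel moment matrices explicitly and then extracting the first coordinate of the joint asymptotic distribution.

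First I would specialize the notation. With $p=1$, we have $\bm{u}=(1,u)^{\T}$, $H=\mathrm{diag}(1,h)$, and $\bm{\beta}^{\ast}=(\beta(t),\beta'(t))^{\T}$. Using that $K$ is symmetric on $[-1,1]$ by Condition 1, the odd moments of $K$ vanish, so
\begin{equation*}
\Omega_{1}=\int K(u)\bm{u}^{\otimes 2}\,du=\begin{pmatrix}1 & 0 \\ 0 & \mu_{2}\end{pmatrix},\qquad \Omega_{2}=\int K^{2}(u)\bm{u}^{\otimes 2}\,du=\begin{pmatrix}\nu_{0} & 0 \\ 0 & \int u^{2}K^{2}(u)\,du\end{pmatrix},
\end{equation*}
and $\bm{b}=\int u^{2}\bm{u}K(u)\,du=(\mu_{2},0)^{\T}$. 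Inverting $\Omega_{1}$ is immediate, giving $\Omega_{1}^{-1}\bm{b}=(\mu_{2},0)^{\T}$ and $\Omega_{1}^{-1}\Omega_{2}\Omega_{1}^{-1}=\mathrm{diag}(\nu_{0},\int u^{2}K^{2}(u)\,du/\mu_{2}^{2})$.

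Next I would verify the bandwidth hypothesis of Theorem \ref{theorem2}: with $p=1$ the condition $nh^{2p+3}=nh^{5}$ bounded together with $h\to 0$ and $nh\to\infty$ is exactly what is assumed in Corollary \ref{coro1}. Thus Theorem \ref{theorem2} applies and yields
\begin{equation*}
(nh)^{1/2}\Bigl\{H(\bm{\widehat{\beta}}-\bm{\beta}^{\ast})-\Omega_{1}^{-1}\bm{b}\,h^{2}\beta''(t)/2\Bigr\}\to N\!\bigl(0,\sigma_{1}^{-2}(t)\sigma_{2}(t)\,\Omega_{1}^{-1}\Omega_{2}\Omega_{1}^{-1}\bigr).
\end{equation*}

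Finally I would read off the first coordinate. Since the first entry of $H(\bm{\widehat{\beta}}-\bm{\beta}^{\ast})$ is $\widehat{\beta}_{0}(t)-\beta(t)=\widehat{\beta}(t)-\beta(t)$, the first entry of the bias term is $\mu_{2}h^{2}\beta''(t)/2$, and the $(1,1)$-entry of the asymptotic covariance is $\nu_{0}\sigma_{1}^{-2}(t)\sigma_{2}(t)$, the marginal asymptotic law of the first component is precisely
\begin{equation*}
(nh)^{1/2}\bigl\{\widehat{\beta}(t)-\beta(t)-\tfrac{1}{2}\mu_{2}h^{2}\beta''(t)\bigr\}\to N\!\bigl(0,\nu_{0}\sigma_{1}^{-2}(t)\sigma_{2}(t)\bigr),
\end{equation*}
which is the claim. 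There is no genuine obstacle here beyond the moment bookkeeping, so the main thing to be careful about is using the symmetry of $K$ to zero out the odd-order integrals; everything else is mechanical algebra once Theorem \ref{theorem2} is in hand.
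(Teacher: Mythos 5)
Your proposal is correct and follows essentially the same route as the paper, which presents the corollary as an immediate specialization of Theorem~\ref{theorem2} to $p=1$ without a separate proof; your explicit computation of $\Omega_{1}$, $\Omega_{2}$, and $\bm{b}$ via the symmetry of $K$, and the extraction of the first coordinate, supplies exactly the bookkeeping the paper leaves implicit.
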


The estimator of nonparametric $\beta(t)$ is asymptotically normal. The bias is of order $h^{2}$ and related to the second derivative of time-varying function $\beta(t)$. As consequence of  \eqref{cor1}, by minimizing the weighted mean integrated squared error:
\begin{equation}
\label{error}
\int_{0}^{\tau} \{4^{-1}\mu_{2}^{2}h^{4}\beta''^{2}(t)+\nu_{0}\sigma_{1}^{-2}(t)\sigma_{2}(t)/(nh)\}w(t)dt,
\end{equation}
we can derive the theoretical optimal bandwidth for $\widehat{\beta}(t)$, as follows:
\begin{equation}
\label{hopt}
h_{opt}=\{\nu_{0}\int_{0}^{\tau}\sigma_{1}^{-2}(t)\sigma_{2}(t)w(t)dt/(\mu_{2}^{2}\int_{0}^{\tau}\beta''^{2}(t)w(t)dt)\}^{1/5}n^{-1/5}.
\end{equation}.

\subsection{Estimation of covariance matrix}

We propose the covariance estimator of $\bm{\widehat{\beta}}$ based on its asymptotic covariance by plugging the estimated $\bm{\beta}$ into covariance in \eqref{them2}, as follows:
\begin{equation}
\label{Sig}
 \widehat{\Sigma}(t)=\widehat{\Sigma}_1^{-1}(t)\widehat{\Sigma}_2(t)\widehat{\Sigma}_1^{-1}(t),
 \end{equation}
 where
\begin{align}
\label{Sig1}&\widehat{\Sigma}_1(t)=n^{-1}\sum_{i=1}^{n}\int_0^{\tau}K_h(u-t)(\bm{u}-\bm{t})^{\otimes 2}I(C_i\geq u)V_{1}(u,\bm{\widehat{\beta}})d\widetilde{N}_i(u),\\
\label{Sig2}&\widehat{\Sigma}_2(t)=n^{-1}\sum_{i=1}^{n} \int_0^{\tau}hK_h^{2}(u-t)(\bm{u}-\bm{t})^{\otimes 2}I(C_i\geq u)V_{2}(u,\bm{\widehat{\beta}})\widehat{\mu}_{0}^{2}(u,\widehat{\beta}(u))\exp(2\bm{\widehat{\beta}}^{\T}z_i(\bm{u}))o_i(u)du,
\end{align}
with 
\begin{align}
\label{V1}&V_{1}(u,\bm{\widehat{\beta}})=S_{n,2}(u,\bm{\widehat{\beta}})/S_{n,0}(u,\bm{\widehat{\beta}})-\{S_{n,1}(u,\bm{\widehat{\beta}})/S_{n,0}(u,\bm{\widehat{\beta}})\}^{2},\\
\label{V2}&V_{2}(u,\bm{\widehat{\beta}})=\big \{z_i-S_{n,1}(u,\bm{\widehat{\beta}})/S_{n,0}(u,\bm{\widehat{\beta}})\big \}^{ 2},\\
\label{Sj}&S_{n, j}(u,\bm{\widehat{\beta}})=n^{-1}\sum_{i=1}^{n}I(C_{i}\geq u)\exp(\bm{\widehat{\beta}}^{\T}z_{i}(\bm{u}))o_{i}(u)z_{i}^{j}, \quad j = 0, 1, 2.
\end{align}

We show that $\widehat{\Sigma}_1(t)$ and $\widehat{\Sigma}_2(t)$ converges in probability to $\Sigma_{1}(t)$ and $\Sigma_{2}(t)$ presented in Appendix 2, respectively. Therefore the estimator $\widehat{\Sigma}(t)$ of the asymptotic covariance $\Sigma(t)=\sigma_1^{-2}(t)\sigma_2(t)\Omega_{1}^{-1}\Omega_{2}\Omega_{1}^{-1}$ in \eqref{them2} is consistent. And detailed proofs are displayed in Appendix 2. Moreover, the finite sample performance of the variance estimation is validated in simulation studies.

\subsection{Asymptotic properties of baseline mean function}

As introduced in $\S$\,2$\cdot$2, we use Breslow type estimator to evaluate the baseline mean function at each fixed time point. Here, we discuss the asymptotic properties of the estimator $\widehat{\mu}_0(t,\widehat{\beta}(t))$.
\begin{theorem}
\label{theorem3}
Under Conditions 1-5, assume that the bandwidth $h$ satisfies the conditions:
\[h\to 0, \quad nh\to \infty, \quad and \quad nh^{5}=o(1),\]
 then the asymptotic distribution of $\widehat{\mu}_0(t,\widehat{\beta}(t))$ satisfies
\begin{equation}
\label{them3}
(nh)^{1/2}(\widehat{\mu}_0(t,\widehat{\beta}(t))-\mu_0(t))\to N(0,\Sigma_3(t)),
\end{equation}
where $\Sigma_3(t)=\nu_0q_0^{-2}(t)q_1^{2}(t)\sigma_1^{-2}(t)\sigma_2(t)$, and $\nu_0=\int K^{2}(u)du$.
\end{theorem}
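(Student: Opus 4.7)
The plan is to linearize the Breslow-type estimator around the true $\beta(t)$, isolate the bottleneck coming from the plug-in of $\widehat{\beta}(t)$, and then invoke Corollary~\ref{coro1}. Specifically, I would start from the decomposition
\begin{equation*}
\widehat{\mu}_0(t,\widehat{\beta}(t))-\mu_0(t) \;=\; \bigl[\widehat{\mu}_0(t,\widehat{\beta}(t))-\widehat{\mu}_0(t,\beta(t))\bigr] \;+\; \bigl[\widehat{\mu}_0(t,\beta(t))-\mu_0(t)\bigr].
\end{equation*}
For the second bracket, rewrite it as the ratio whose numerator is $n^{-1}\sum_i I(C_i\ge t)o_i(t)\{N_i(t)-\mu_0(t)\exp(\beta(t)z_i)\}$ and whose denominator converges in probability to $q_0(t)/\mu_0(t)$ by a standard law of large numbers under Conditions~2--5. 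The numerator has mean zero by model \eqref{MF1}, and is $O_p(n^{-1/2})$, so the whole bracket is $O_p(n^{-1/2})$; the bandwidth assumption $nh^5=o(1)$ together with $nh\to\infty$ gives $(nh)^{1/2}\cdot n^{-1/2}=h^{1/2}\to 0$, so this contribution vanishes in the $(nh)^{1/2}$-scale.

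For the first bracket, apply a mean-value expansion in the scalar argument $b$,
\begin{equation*}
\widehat{\mu}_0(t,\widehat{\beta}(t))-\widehat{\mu}_0(t,\beta(t)) \;=\; \frac{\partial \widehat{\mu}_0}{\partial b}(t,\beta^{\ast}(t))\,\bigl(\widehat{\beta}(t)-\beta(t)\bigr),
\end{equation*}
where $\beta^{\ast}(t)$ lies between $\widehat{\beta}(t)$ and $\beta(t)$. Direct differentiation of the ratio defining $\widehat{\mu}_0$, followed by the law of large numbers (using $E[I(C\ge t)N(t)o(t)]=q_0(t)$, $E[I(C\ge t)\exp(\beta(t)Z)o(t)]=q_0(t)/\mu_0(t)$, and $E[I(C\ge t)Z\exp(\beta(t)Z)o(t)]=q_1(t)/\mu_0(t)$), identifies the probability limit of $\partial\widehat{\mu}_0/\partial b$ at $b=\beta(t)$. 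Consistency of $\widehat{\beta}(t)$ (immediate from Corollary~\ref{coro1}) pushes $\beta^{\ast}(t)\xrightarrow{p}\beta(t)$, so the random coefficient at the intermediate point also converges to the same deterministic limit, call it $c(t)$. Then Corollary~\ref{coro1} with $nh^5=o(1)$ makes the $h^{2}\beta''(t)$ bias of order $o((nh)^{-1/2})$, yielding
\begin{equation*}
(nh)^{1/2}\bigl(\widehat{\beta}(t)-\beta(t)\bigr)\;\to\;N\bigl(0,\,\nu_0\sigma_1^{-2}(t)\sigma_2(t)\bigr),
\end{equation*}
and Slutsky's lemma gives the claimed normal limit with asymptotic variance $c(t)^2\nu_0\sigma_1^{-2}(t)\sigma_2(t)=\Sigma_3(t)$.

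The hardest step will be step three: justifying the Taylor expansion with the derivative evaluated at the random intermediate value $\beta^{\ast}(t)$. This requires a locally uniform (in $b$) law of large numbers for the sample versions of $B(b)$ and $B'(b)$ on a neighborhood of $\beta(t)$, which follows from the Glivenko--Cantelli type argument enabled by the boundedness of $N$ and $O$ in Condition~2 together with the moment bound on $Z$, and from continuity in Condition~4 to guarantee that the limiting denominator $q_0(t)/\mu_0(t)$ is bounded away from zero. Once the derivative is shown to converge to a nonrandom continuous limit, the remaining work is bookkeeping to check that the plug-in term dominates the Breslow-type remainder and to assemble the variance via Slutsky's theorem.
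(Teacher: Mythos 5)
Your proposal is correct and follows essentially the same route as the paper's proof: the identical two-term decomposition, the same mean-value expansion of the Breslow ratio in $b$ with the derivative evaluated at a random intermediate point $\beta^{\ast}(t)$ and shown to converge to a deterministic limit, the same observation that the Breslow error at the true $\beta(t)$ is $O_p(n^{-1/2})$ and hence $o_p((nh)^{-1/2})$, and the same final appeal to Corollary~\ref{coro1} with $nh^5=o(1)$ plus Slutsky. (One shared bookkeeping remark: the expectations you list give the derivative limit $-\mu_0(t)q_1(t)/q_0(t)$ rather than $-q_1(t)/q_0(t)$, so $c(t)^2$ carries a $\mu_0^2(t)$ factor not present in the stated $\Sigma_3(t)$; the paper's own display \eqref{H1} drops this same factor.)
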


The detailed proofs are presented in Appendix 2. Furthermore, the rate of convergence for $\widehat{\mu}_{0}(t,\widehat{\beta}(t))$ is $(nh)^{1/2}$ which is the same as the rate of $\widehat{\beta}(t)$. And the finite sample performance of the estimator is displayed in simulation studies.

\section{Simulation}

In this section, we evaluated the finite sample performance of the proposed local kernel estimator through a numerical study. In each simulated data set, we generated $n$ independent and identically distributed random variables $\{K_i,T_i, N_i, Z_i\}$. For each individual $i$, the number of observation $K_i$ was generated as a discrete uniform distribution on $\{1, 2, \ldots, C\}$, where the number $C$ was finite. And the follow-up time $T_i=(T_{i1}, \ldots, T_{iK_i})$ were generated as an exponential distribution. The covariate $Z_i$ was generated from uniform distribution $U(0,1)$. Given the time-varying coefficient $\beta(t)$, we generated the recurrent event $N_i$ from nonhomogeneous Poisson process with mean function $\mu_0(t)\exp(\beta(t)z_i)$. That is, the event number between two consecutive observation times were generated from Poisson distribution with the mean $\mu_0(T_{i,j})\exp(\beta(T_{i,j})z_i)-\mu_0(T_{i,j-1})\exp(\beta(T_{i,j-1})z_i)$ and 
\[
N_{i,j}-N_{i,j-1}\sim Poisson( \mu_0(T_{i,j})\exp(\beta(T_{i,j})z_i)-\mu_0(T_{i,j-1})\exp(\beta(T_{i,j-1})z_i)).
\]

We considered the mean function model under two parameter settings. For each setting, we set $p=1$ and used Epanechnikov kernel to estimate the local kernel estimator with bandwidth $h$ equal to 0$\cdot$3 and 0$\cdot$5, respectively. We performed the simulation with sample sizes 300 and 500. For each setting, we generated 1000 datasets. In this section, we only showed the results under sample size of 300, and the simulation results with sample size of 500 was presented in Appendix 3. We performed the estimation at 100 equally spaced grid points on the time interval. The maximum number of observed times for per individual was $C=10$, and the maximum follow-up time was 6.

In the first setting, we set  the regression function as $\beta(t)=\surd{t}$, and the baseline function $\mu_{0}(t)=2 t^{2}+2$. The results were shown in Figure~\ref{fig1}. Panels a1 and a2 of Figure~\ref{fig1} presented the true curve $\beta(t)$, and the average of the local kernel estimator $\widehat{\beta}(t)$ with the bandwidth set at 0$\cdot$3 and 0$\cdot$5, respectively. The estimators were generally very close to the true value with a slight deviation on the boundary. 
Panels a3 and a4 of Figure~\ref{fig1} compared the estimated and empirical standard errors of the local kernel estimator with bandwidth 0$\cdot$3 and 0$\cdot$5, respectively. As can be seen, there were good agreement between the estimated and empirical standard errors from different bandwidth, with slight bias near the boundary. Panels a5 and a6 of Figure~\ref{fig1} showed the empirical coverage probabilities of the 95$\%$ confidence intervals with bandwidth 0$\cdot$3 and 0$\cdot$5, respectively. The empirical coverage probabilities were generally around 95$\%$ with lower coverage probabilities on the boundary due to the relative larger bias of the coefficient estimator. The simulation results with sample size of 500 showed similar pattern and were displayed in Appendix 3.
Panels a7 and a8 of Figure~\ref{fig1} showed the baseline function estimator. The estimators were close to the true curve. The estimated curve of sample size 500 was closer to the true curve than that of sample size 300 slightly.

\begin{figure}
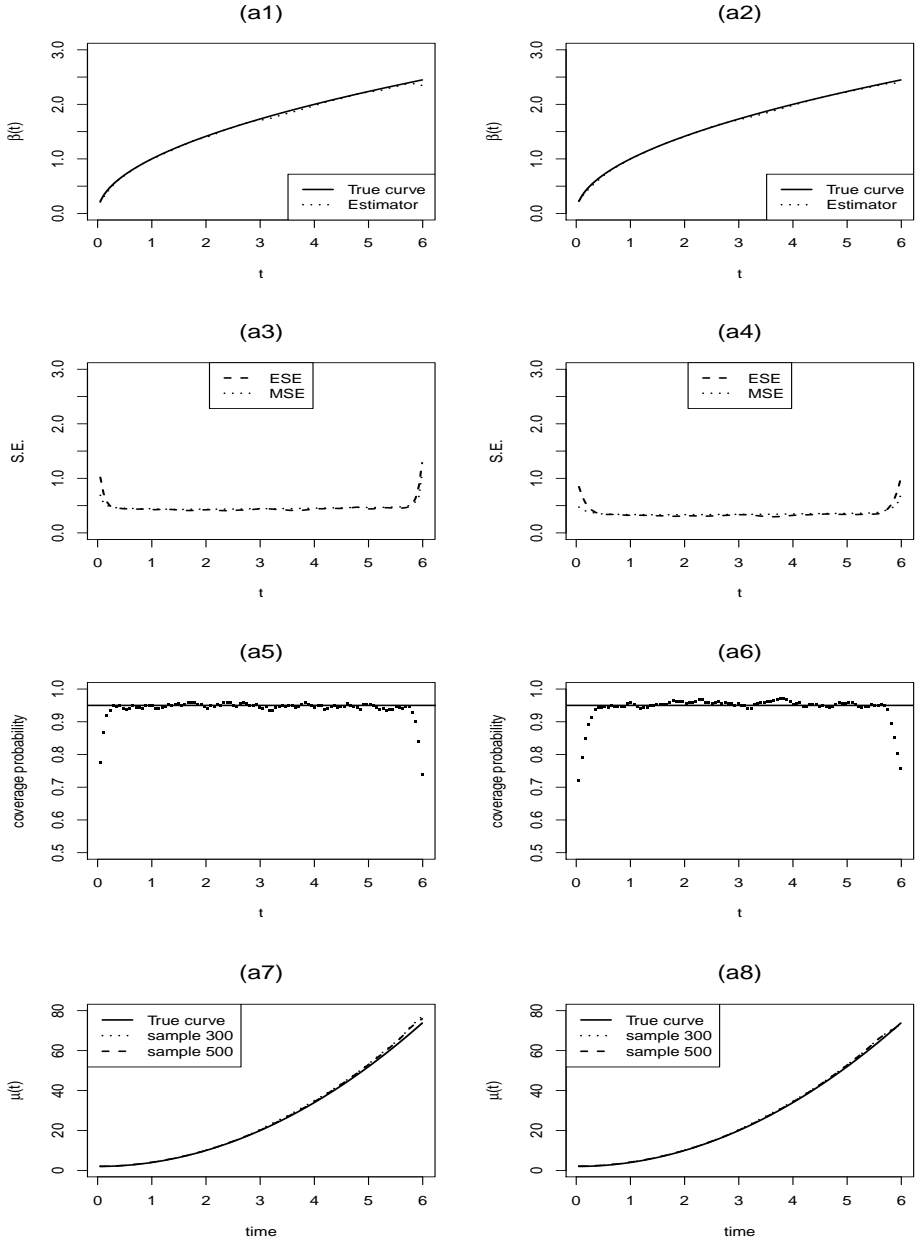

\centering
\begin{minipage}[t]{0.425\linewidth}
\centering
\figurebox{10pc}{\textwidth}{}[be3.3.eps]
\end{minipage}
\begin{minipage}[t]{0.425\textwidth}
\centering
\figurebox{10pc}{\textwidth}{}[be3.5.eps]
\end{minipage}
\begin{minipage}[t]{0.425\textwidth}
\centering
\figurebox{10pc}{\textwidth}{}[se3.3.eps]
\end{minipage}
\begin{minipage}[t]{0.425\textwidth}
\centering
\figurebox{10pc}{\textwidth}{}[se3.5.eps]
\end{minipage}
\begin{minipage}[t]{0.425\textwidth}
\centering
\figurebox{10pc}{\textwidth}{}[cp3.3.eps]
\end{minipage}
\begin{minipage}[t]{0.425\textwidth}
\centering
\figurebox{10pc}{\textwidth}{}[cp3.5.eps]
\end{minipage}
\begin{minipage}[t]{0.425\textwidth}
\centering
\figurebox{10pc}{\textwidth}{}[1mu0.3.eps]
\end{minipage}
\begin{minipage}[t]{0.425\textwidth}
\centering
\figurebox{10pc}{\textwidth}{}[1mu0.5.eps]
\end{minipage}
\caption{(a1) and (a2): The true and the average of the local kernel estimator with bandwidth 0$\cdot$3 and 0$\cdot$5, respectively. (a3) and (a4): Comparison of empirical standard errors (ESE) and the estimated standard errors (MSE) of $\widehat{\beta}(t)$ with bandwidth 0$\cdot$3 and 0$\cdot$5, respectively; (a5) and (a6): Empirical coverage probabilities of the 95$\%$ confidence intervals for $\widehat{\beta}(t)$ with bandwidth 0$\cdot$3 and 0$\cdot$5, respectively. (a7) and (a8): Compare the true baseline curve and the average of the estimator with bandwidth 0$\cdot$3 and 0$\cdot$5, respectively, under sample sizes 300 and 500.}
\label{fig1}
\end{figure}

In the second setting, we set regression function as $\beta(t)$=0$\cdot$5(Beta($t/12,3,3$)+Beta($t/12,4,4$)), where Beta($\cdot$) was the Beta density function, and the baseline function $\mu_{0}(t)=2+ 2\surd{t}$. Similar to the first setting, the results also had good performance as showed in Figure~\ref{fig2}. Panels b1 and b2 of Figure~\ref{fig2} showed that the true curve $\beta(t)$, and the average of the local kernel estimator $\widehat{\beta}(t)$ with the bandwidth set at 0$\cdot$3 and 0$\cdot$5, respectively. The estimators were very close to the true value with a slight bias near the boundary. Panels b3 and b4 of Figure~\ref{fig2} compared the estimated and empirical standard errors of the local kernel estimator with bandwidth equal to 0$\cdot$3 and 0$\cdot$5, respectively. Obviously, there were good concordance between estimated and empirical standard errors, with slight deviation on the boundary. Panels b5 and b6 of Figure~\ref{fig2} displayed the empirical coverage probabilities of the 95$\%$ confidence intervals with bandwidth 0$\cdot$3 and 0$\cdot$5, respectively. The empirical coverage probabilities were generally around 95$\%$. There were lower coverage probabilities on the boundary owing to the relative larger bias of the coefficient estimator. The simulation results with sample size of 500 showed analogous pattern and were presented in Appendix 3.
Panels b7 and b8 of Figure~\ref{fig2} presented the baseline function estimator. The estimators were close to the true baseline curve. Same as the first setting, the estimated curve of sample size 500 was closer to the true curve than that of sample size 300 slightly. 

In summary, the local kernel estimators performed well in terms of small estimation bias and good coverage probabilities of the confidence intervals. We will apply the estimation procedure to analyze a childhood asthma study data. 

\begin{figure}
\centering
\begin{minipage}[t]{0.425\textwidth}
\centering
\figurebox{10pc}{\textwidth}{}[be33.eps]
\end{minipage}
\begin{minipage}[t]{0.425\textwidth}
\centering
\figurebox{10pc}{\textwidth}{}[be35.eps]
\end{minipage}
\begin{minipage}[t]{0.425\textwidth}
\centering
\figurebox{10pc}{\textwidth}{}[se33.eps]
\end{minipage}
\begin{minipage}[t]{0.425\textwidth}
\centering
\figurebox{10pc}{\textwidth}{}[se35.eps]
\end{minipage}
\begin{minipage}[t]{0.425\textwidth}
\centering
\figurebox{10pc}{\textwidth}{}[cp33.eps]
\end{minipage}
\begin{minipage}[t]{0.425\textwidth}
\centering
\figurebox{10pc}{\textwidth}{}[cp35.eps]
\end{minipage}
\begin{minipage}[t]{0.425\textwidth}
\centering
\figurebox{10pc}{\textwidth}{}[2mu0.3.eps]
\end{minipage}
\begin{minipage}[t]{0.425\textwidth}
\centering
\figurebox{10pc}{\textwidth}{}[2mu0.5.eps]
\end{minipage}
\centering
\caption{(b1) and (b2): The true and the average of the local kernel estimator with bandwidth 0$\cdot$3 and 0$\cdot$5, respectively. (b3) and (b4): Comparison of empirical standard errors (ESE) and the estimated standard errors (MSE) of $\widehat{\beta}(t)$ with bandwidth 0$\cdot$3 and 0$\cdot$5, respectively; (b5) and (b6): Empirical coverage probabilities of the 95$\%$ confidence intervals for $\widehat{\beta}(t)$ with bandwidth 0$\cdot$3 and 0$\cdot$5, respectively. (b7) and (b8): Compare the true baseline curve and the average of the estimator with bandwidth 0$\cdot$3 and 0$\cdot$5, respectively, under sample sizes 300 and 500.}\label{fig2}
\end{figure}

\section{Application}

The childhood wheezing study was designed and conducted at Indiana University School of Medicine (\citet{Tepper:2008}). In this study, 105 infants with high risk of developing asthma were recruited. The cumulative wheezing episodes were collected by monthly phone call. The median follow-up time was 33$\cdot$5 months, and the total number of wheezing events was 625. For the baseline characteristics, 49$\cdot$5$\%$ was boys, and 10$\cdot$5$\%$ of children's mothers smoked during pregnancy. And the mean age at enrollment was 10$\cdot$8 months. In recent human asthma study, \citet{JJD:2005} indicated that interleukin-10 (IL-10) regulated the suppressive activity of T cells, which played an important role in human asthma. Furthermore, \citet{HMJ:1998} showed that IL-10 had differential effects on T cells relying on their activated state. The potent anti-inflammatory cytokine IL-10 was shown to be risk factor for infection in early childhood (\citet{Yao:2010}). Also the IL-10's effect may vary during the childhood growth. Therefore, we applied the proposed method to analyze the time-varying effect of interleukin IL-10 about the childhood wheeze data set. 

We estimated the time-varying effect IL-10 on the risk of wheezing using the proposed local kernel estimator. The bandwidth was set as 20 and results were shown in Figure~\ref{fig3}. In general, IL-10 had significant effect on the risk of child wheezing over the follow-up period. The relative risk increased over time period from 25 to 70 months and decreased over time near the boundary. We also estimated the IL-10 effect as a constant coefficient and the overall relative risk was 1$\cdot$53 ($p$-value$<$0$\cdot$05). Although both time-varying and constant effect estimators showed significant results, the time-varying estimator demonstrated an increasing IL-10 effect as age increased. Overall, we illustrated IL-10 was positive associate with child wheezing. And subjects with an increased value of IL-10 will have higher wheezing risk.

\begin{figure}
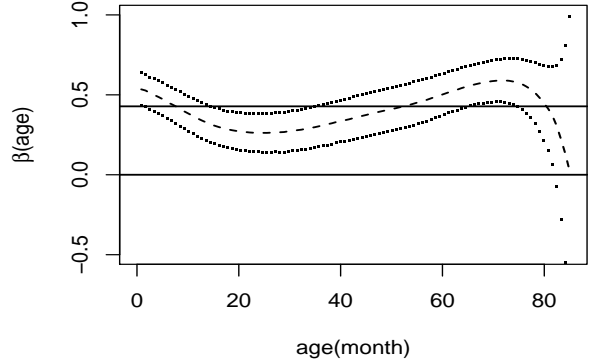

\figurebox{15pc}{20pc}{}[Real20.eps]
\caption{Estimated IL-10 effect, $\widehat{\beta}(age)$, time-varying effect (dash); 95$\%$ confidence interval (dotted); IL-10 effect based on the model with a constant coefficient (horizontal solid line, $\beta$ = 0$\cdot$428).}
\label{fig3}
\end{figure}



\section{Discussion}
In this paper, we propose an local kernel estimation procedure for the panel count model with time-varying coefficients. We construct a  kernel-weighted local partial likelihood at each fixed time point on the basis of local polynomial interpolation. The strong uniform consistency of the proposed estimator is derived. We also show that the proposed estimator is asymptotically normal under some regularity conditions. Furthermore, the simulation results demonstrate the proposed estimation methods perform well under finite sample sizes. The application of the proposed methods for the clinical data analysis also demonstrates that the time-varying coefficient estimation provides more information on the effect of risk factors on the panel count outcome measurement. Through this paper, we provide a nonparametric approach for time-varying coefficient in panel count data. Compared with the spline estimator for panel count model of which the asymptotic normality of $\bm{\widehat{\beta}}$ is not verified, our approach provides a thorough theoretical investigation. The inference of $\bm{\widehat{\beta}}$ is also developed.

Meanwhile, there are challenges remaining in the local kernel method for panel count data, for example, the bandwidth selection. From \eqref{hopt}, the theoretical optimal bandwidth depends on unknown quantities $\sigma_{1}(t)$, $\sigma_{2}(t)$ and $\beta''(t)$. It turned out that bandwidth selection is hard problem for the panel count data model with nonparametric time-varying coefficients, which is beyond the scope of this paper. Nevertheless, there is warrant to select the optimal bandwidth based on data-driven technique. The common selection tool is cross validation method. \citet{Hoover:1998} developed a cross validation criterion of bandwidth selection for longitudinal data. \citet{Cai:2000} proposed a cross validation technique to select optimal bandwidth for time-varying model. \citet{Tian:2005} discussed a $K$-fold cross validation method to select bandwidth for survival data. Therefore, developing a cross validation technique for time-varying coefficients panel count model is of future research interest.

\section*{Acknowledgement}
The research was supported in part by National Natural Science Foundation of China (11671256 Yu), and by Chinese Ministry of Science and Technology (2016YFC0902403 Yu), and by the University of Michigan and Shanghai Jiao Tong University Collaboration Grant (2017, Yu).

\vspace*{-10pt}

\appendix

\appendixone
\section*{Appendix 1}
\subsection*{Notation and Lemmas}

In Appendices, the notations are the same as $\S$\,2 and $\S$\,3. For simplicity of presentation, we introduce some additional notations, let $\bm{\alpha}=H(\bm{\beta}-\bm{\beta}^{\ast})=(\alpha_{0}, \alpha_{1},\ldots, \alpha_{p})^{\T}$, where $\alpha_{k}=h^{k}(\beta_{k}(t)-\beta^{(k)}(t)/k!)$, $H$ is $p$th-order diagonal matrix and $\bm{\beta}^{\ast}$ is the true vector. $\tilde z_i(\bm{u})=H^{-1}z_i(\bm{u})=z_i(1, (u-t)/h,\ldots, (u-t)^{p}/h^{p})^\T$. For a matrix $A=(a_{ij})$, $\|A\|=\sup_{i,j}|a_{ij}|$. For a vector $\bm{a}$, $\|\bm{a}\|=\sup_{i}|a_{i}|$, and $|\bm{a}|=(\sum a_{i}^{2})^{1/2}$.
Some further definitions are:\\
For $j=0,1,2$, set
\begin{align*}
&S_{n,j}(u,\bm{\alpha})=n^{-1} \sum_{i=1}^n I(C_i\geq u)\exp(\bm{\alpha}^\T\tilde z_i(\bm{u})+\bm{\beta}^{\ast \T} z_i(\bm{u})) o_i(u)z_{i}^{j},\\
&S_j(u,\bm{\alpha})=E(p_1(u \mid z)p_2(u \mid z)\exp(\bm{\alpha}^\T \tilde z(\bm{u})+\bm{\beta}^{\ast \T} z(\bm{u}))z^{j});
\end{align*}
For $j=0 ,1, 2$, set
\begin{align*}
&\widetilde S_{n,j}(u)=n^{-1}\sum_{i=1}^n I(C_i \geq u)\exp(\bm{\beta}^{\ast\T} z_i(\bm{u}))o_i(u)\tilde z_i(\bm{u})^{\otimes j},\\
&\widetilde S_j(u)=E(p_1(u \mid z)p_2(u \mid z)\exp(\bm{\beta}^{\ast\T} z_i(\bm{u}))\tilde z(\bm{u})^{\otimes j});
\end{align*}
For $j=0,1$, put
\begin{align*}
&\widetilde S_{n,j}^\ast (u)=n^{-1}\sum_{i=1}^nI(C_i\geq u)exp(\beta(u)z_i)o_i(u)\tilde z_i^{j}(\bm{u}),\\
&\widetilde S_j^\ast(u)=E(p_1(u \mid z)p_2(u \mid z)\exp(\beta(u)z)\tilde z^{j}(\bm{u}));
\end{align*}
For $j=0,1,2$, set
\begin{align*}
&S_{n,j}(u,\bm{\beta}^{\ast})=n^{-1}\sum_{i=1}^n I(C_i\geq u)\exp(\bm{\beta}^{\ast\T}z_i(\bm{u}))o_i(u)z^{j}_i,\\
&S_j(u,\bm{\beta}^{\ast})=E(p_1(u \mid z)p_2(u \mid z) \exp(\bm{\beta}^{\ast\T}z_i(\bm{u}))z^{j});
\end{align*}
For $j=0,1,2$, put
\begin{align*}
&S_{n,j}^\ast(u,\beta(u))=n^{-1}\sum_{i=1}^{n} I(C_i\geq u)\exp(\beta(u)z_i)o_i(u)z_{i}^{j},\\
&S_{j}^\ast(u,\beta(u))=E(p_1(u \mid z)p_2(u \mid z)\exp(\beta(u)z)z^{j}).
\end{align*}
 
\begin{lemma}
\label{lemA1}
Let
\[
c_n(u)=n^{-1}\sum_{i=1}^n I(C_i \geq u) o_i(u)g(u,z_i) \quad and \quad c(u)=E(p_1(u \mid z)p_2(u \mid z) g(u,z)),
\]
if $g(u,z_i)$ is bounded variation, then
\begin{equation}
\label{A1}
\sup_{u \in T}\|c_n(u)-c(u)\|=O_p(n^{-1/2})
\end{equation}
\end{lemma}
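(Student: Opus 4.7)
The plan is to view $c_n(u) - c(u)$ as a centered empirical process indexed by $u \in T$ and invoke a uniform limit theorem from empirical process theory. Writing $X_i = (C_i, O_i(\cdot), Z_i)$ and $f_u(X) = I(C \geq u)\,o(u)\,g(u,Z)$, we have $c_n(u) = P_n f_u$ and $c(u) = P f_u$; the identity $Pf_u = c(u)$ follows from the conditional independence $N \perp O \perp C \mid Z$ stated after (1) together with the definitions of $p_1$ and $p_2$. The task then reduces to showing
\[
\sup_{u\in T}\bigl|(P_n - P) f_u\bigr| = O_p(n^{-1/2}).
\]

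To obtain this parametric rate, I would establish that the class $\mathcal{F} = \{f_u : u \in T\}$ is $P$-Donsker with a bounded envelope. Condition 2 bounds the counting processes and gives an $L_\lambda$ envelope on $Z$, while bounded variation of $g(\cdot, z)$ on the compact interval $T$ yields pointwise boundedness in $u$. For the entropy control I would decompose $\mathcal{F}$ as a product of three simpler classes: the VC class of monotone indicators $\{I(\cdot \geq u) : u \in T\}$ (VC index $2$), the bounded-variation class $\{g(u, \cdot) : u \in T\}$, whose $L_2$-bracketing numbers are controlled by an $\varepsilon$-net in $u$ together with the total variation bound of $g$, and the bounded multiplier $o(u)$. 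The Donsker permanence properties for products of bounded Donsker classes (van der Vaart and Wellner, Section 2.10) then assemble these pieces into a Donsker class.

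Once $\mathcal{F}$ is known to be $P$-Donsker, the empirical process $\sqrt{n}(P_n - P)$ is asymptotically tight in $\ell^\infty(\mathcal{F})$ and converges weakly to a tight Gaussian limit. Asymptotic tightness alone yields the uniform stochastic boundedness $\sqrt{n}\sup_{u\in T}|(P_n - P)f_u| = O_p(1)$, which is precisely the claim of the lemma.

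The main technical obstacle I expect is the multiplier $o(u)$ inside $f_u$: as a jump-indicator of a point process, the sample path $u \mapsto o_i(u)$ is neither monotone nor of deterministically bounded variation, so the Donsker product argument is not immediate from the simplest permanence lemmas. I would overcome this either by working at the level of the integrated increments $dO_i(u)$, where the associated distribution-function classes indexed by $u$ reduce to nested half-lines with a genuine VC structure and $g(u, z_i)$ can be handled via integration by parts in $u$, or by reinterpreting $o(\cdot)$ via its conditional mean $p_2(\cdot \mid z)$, which is continuous and bounded by Condition 4; either reading makes the empirical-process machinery go through and delivers the $n^{-1/2}$ supremum rate.
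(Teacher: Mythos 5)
Your plan is sound and lands in the same family of argument as the paper's proof: both reduce the lemma to a uniform empirical-process bound over a class generated by monotone indicators and bounded-variation functions. The difference is in framework and in one structural choice. The paper works entirely inside Pollard's (1990) manageability machinery: it observes that, under Conditions 2 and 4, the \emph{product} $o_i(u)g(u,z_i)$ is itself of bounded variation in $u$ (since $O_i$ is a bounded point process, $o_i(\cdot)$ is supported on finitely many jumps), writes it as a difference $g_1-g_2$ of nonnegative nondecreasing functions, pairs each piece with the non-increasing indicator $I(C_i\geq u)$ to get classes of pseudodimension at most $1$ (Lemma A.2 of Bilias et al.), assembles them via Pollard's Lemma 5.1 into a class of pseudodimension at most $10$, hence Euclidean and manageable, and then applies Pollard's Theorem 8.3 with envelopes $B_1/\surd{n}$ to extract the $O_p(n^{-1/2})$ rate directly. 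You instead invoke the van der Vaart--Wellner Donsker calculus and product-permanence for uniformly bounded Donsker classes, getting the rate from asymptotic tightness in $\ell^{\infty}$; that is a legitimate alternative route to the same conclusion. The obstacle you flag with the multiplier $o(u)$ is exactly the point the paper's decomposition dissolves: rather than treating $o(u)$ as a separate factor, fold it into the bounded-variation decomposition jointly with $g$, which is cleaner than either of your proposed workarounds (in particular, replacing $o(\cdot)$ by its conditional mean $p_2(\cdot\mid z)$ does not by itself control $c_n(u)$, which contains the realized $o_i(u)$, so you would still need to handle the centered remainder). One shared soft spot: both your envelope claim and the paper's appeal to Condition 2 gloss over the fact that $Z$ is only assumed to have a finite $\lambda$th moment, so boundedness of the envelope in $z$ is not automatic for the $g$'s actually used later; this is not a defect specific to your proposal.
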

\begin{proof}
\label{P1}
Given $g(u,z_i) $ is bounded variation, and under Conditions 2 and 4, we have $o_i(u) g(u,z_i)$ is bounded variation, then we can write $o_i(u) g(u,z_i)=g_1(u,z_i)-g_2(u,z_i)$, where both $g_1(u,z_i)$ and $g_2(u,z_i)$ are nonnegative and nondecreasing. Thus
\begin{equation}
\label{A2}
c_n(u)=n^{-1}\sum_{i=1}^n\{I(C_i\geq u)g_1(u,z_i)-I(C_i\geq u)g_2(u,z_i)\},
\end{equation}
 and $I(C_i \geq u)$, for each $i$, is non-increasing in $u$, then by lemma A.2 of \citet{Bill:1997}, $\{I(C_i \geq u), u \in T\},\{g_j(u, z_i), u \in T\}_{j=1,2}$ have pseudodimension at most 1. By lemma 5.1 of \citet{Pollard:1990} combined with \eqref{A2}, $\{I(C_i \geq u) o_i(u) g(u,z_i),u \in T \}$ has pseduodimension at most 10. Therefore, it must be Euclidean and certainly manageable according to theorem 4.8 of \citet{Pollard:1990}. In view of Condition 2, we choose envelops as $B_1/\surd{n}$, for some constant $B_1$. Then by theorem 8.3 (the uniform laws of large numbers) of \citet{Pollard:1990}, we have $sup_{u \in T}\|c_n(u)-c(u)\|=O_p(n^{-1/2})$.
\end{proof}

\begin{lemma}
\label{lemA2}
Let $T=[a,b]\subset R$, suppose that
\begin{equation}
\label{A3}
\lim_{n \to \infty} \sup_{s \in T} \big \{ |h_n(s)-h(s)| + |J_n(s)-J(s) | \big \}=0,
\end{equation}
where $h_{n}(\cdot)$, $h(\cdot)$ are continuous on $T$, and $J_n(\cdot)$, $J(\cdot)$ are right continuous with bounded variations on $T$. Then
\begin{align}
\label{A4}&\lim_{n\to \infty} \sup_{s \in T}\big \{| \int_{a}^{s} h_n(u)J_n(du)-\int_{a}^{s} h(u)J(du)|\big \}=0,\\
\label{A5}&\lim_{n\to \infty} \sup_{s \in T} \big \{|\int_{a}^{s} h_n(u)J_n(du)-\int_{a}^{s} h_n(u)J(du)| \big \}=0.
\end{align}
\end{lemma}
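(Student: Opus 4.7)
The plan is to establish \eqref{A5} first and deduce \eqref{A4} as a quick consequence. For \eqref{A4} I would decompose
\[
\int_a^s h_n(u)\,J_n(du)-\int_a^s h(u)\,J(du)
=\Big(\int_a^s h_n(u)\,J_n(du)-\int_a^s h_n(u)\,J(du)\Big)+\int_a^s\bigl(h_n(u)-h(u)\bigr)\,J(du).
\]
The first bracket is exactly \eqref{A5} applied to $h_n$ and so tends to zero uniformly in $s$ once \eqref{A5} is proved; the second is dominated by $\|h_n-h\|_\infty\cdot\mathrm{TV}(J)$, which vanishes uniformly in $s$ by hypothesis.

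For \eqref{A5} itself, I would split once more:
\[
\int_a^s h_n(u)\,d(J_n-J)(u)=\int_a^s\bigl(h_n(u)-h(u)\bigr)\,d(J_n-J)(u)+\int_a^s h(u)\,d(J_n-J)(u).
\]
The first piece is bounded uniformly in $s$ by $\|h_n-h\|_\infty\cdot\bigl(\mathrm{TV}(J_n)+\mathrm{TV}(J)\bigr)$, which is $o(1)$ provided $\sup_n\mathrm{TV}(J_n)<\infty$. For the second piece, I would exploit uniform continuity of $h$ on the compact interval $[a,b]$: fix $\varepsilon>0$, pick a partition $a=t_0<t_1<\cdots<t_N=b$ with $\sup_{u\in[t_{k-1},t_k]}|h(u)-h(t_{k-1})|<\varepsilon$ for every $k$, and set $\tilde h(u)=h(t_{k-1})$ for $u\in[t_{k-1},t_k)$. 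Then
\[
\Big|\int_a^s(h-\tilde h)\,d(J_n-J)\Big|\le\varepsilon\bigl(\mathrm{TV}(J_n)+\mathrm{TV}(J)\bigr),
\]
while
\[
\int_a^s\tilde h\,d(J_n-J)=\sum_{k:\,t_k\le s}h(t_{k-1})\bigl[(J_n-J)(t_k)-(J_n-J)(t_{k-1})\bigr]+h(t_{k_s-1})\bigl[(J_n-J)(s)-(J_n-J)(t_{k_s-1})\bigr]
\]
is a sum of at most $2N+2$ terms, each dominated by $\|h\|_\infty\cdot\sup_{s\in T}|J_n(s)-J(s)|\to 0$. Taking $n\to\infty$ first and then $\varepsilon\to 0$ drives the right-hand side to zero uniformly in $s$, which proves \eqref{A5} and hence \eqref{A4}.

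The main obstacle is the uniform boundedness $\sup_n\mathrm{TV}(J_n)<\infty$: uniform convergence of $J_n$ to a bounded-variation $J$ does not by itself force this (consider $J_n(t)=\sin(n^2t)/n$, where $\mathrm{TV}(J_n)=4n$), so this must either be built into the hypotheses or harvested from the particular structure of the $J_n$ that arise in the applications of Lemma~\ref{lemA2}. In the usages envisaged — for instance the processes $\widetilde S_{n,j}$ and the $c_n$ appearing in Lemma~\ref{lemA1} — the bounded-variation integrands together with Condition~2 deliver uniform bounds on $\mathrm{TV}(J_n)$, so the argument goes through. Apart from this issue, the remainder is a routine uniform Helly–Bray estimate, combining uniform continuity of $h$ on the compact set $T$ with the pointwise (indeed uniform) convergence $J_n\to J$ at finitely many knots.
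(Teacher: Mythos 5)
Your proof is correct and follows essentially the same route as the paper's: both arguments rest on a step-function approximation of the continuous $h$ (uniform continuity on the compact $T$), uniform convergence of $J_n-J$ evaluated at the finitely many knots, and a uniform bound on the total variations; the only difference is that you prove \eqref{A5} first and deduce \eqref{A4}, while the paper proves \eqref{A4} first (via its displays (A6)--(A9)) and then deduces \eqref{A5}, which is purely cosmetic. Your observation about $\sup_n\mathrm{TV}(J_n)<\infty$ is well taken --- the paper's proof also silently invokes a uniform bound $B_2$ on the total variations that is not literally in the lemma's hypotheses, and, as you note, it is supplied by Condition~2 in the applications.
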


\begin{proof}
\label{P2}
First, since $h_n$ uniform converges to $h$, and $J_n$ ,$J$ are bounded variation functions with total variations bounded $B_{2}$, for some constant $B_{2}$. Then
\begin{align}
\label{A6}&\lim_{n \to \infty} \sup_{s \in T} \big \{| \int_{a}^{s}h_n(u)J_n(du)-\int_{a}^{s}h(u)J_n(du)| \big \}=0,\\
\label{A7}&\lim_{n \to \infty} \sup_{s \in T} \big \{| \int_{a}^{s}h_n(u)J(du)-\int_{a}^{s}h(u)J(du)|\big \}=0.
\end{align}
Since
\begin{align}
\label{A8}
& |\int_{a}^{s}h_n(u)J_n(du)-\int_{a}^{s}h_n(u)J(du)| \nonumber \\
& \leq | \int_{a}^{s} h_n(u)J_n(du)-\int_{a}^{s}h(u)J(du)|+|\int_{a}^{s}h(u)J(du)-\int_{a}^{s} h_n(u)J(du)|.
\end{align}
Thus, from \eqref{A7} and \eqref{A8}, we know that \eqref{A4} implies \eqref{A5}. And since
\begin{align}
\label{A9}
&| \int_{a}^{s}h_n(u)J_n(du)-\int_{a}^{s}h(u)J(du)| \nonumber\\
&\leq | \int_{a}^{s} h_n(u)J_n(du)-\int_{a}^{s}h(u)J_n(du)| + | \int_{a}^{s}h(u)J_n(du)-\int_{a}^{s}h(u)J(du)|.
\end{align}
For the second term of the right-hand side in \eqref{A9}, since $h(\cdot)$ is continuous, we can partition $T$ by $a=s_0< \ldots <s_{n_0}=b$, and take constant $h_j(=h(s_j))$ such that the simple function:
\begin{equation}
\label{A10}
h_{\varepsilon}(s)=\sum_{j=0}^{n_0-1}h_{j}I(s \in [s_j, s_{j+1}) )
\end{equation}
satisfies
\begin{equation}
\label{A11}
\sup_{s\in T}| h_{\varepsilon}(s)-h(s)| < \varepsilon.
\end{equation}
Thus
\begin{align*}
&| \int_{a}^{s}h(u)J_n(du)-\int_{a}^{s}h(u)J(du)|\\
&\leq | \int_{a}^{s}\{h(u)-h_{\varepsilon}(u)\}J_n(du)|+| \int_{a}^{s}h_{\varepsilon}(u)\{J_n(du)-J(du)\}| + | \int_{a}^{s}\{h(u)-h_{\varepsilon}(u)\}J(du)|\\
&\leq 2\varepsilon B_2+| \int_{a}^{s} \sum_{j=0}^{n_0-1}h_{j}I(u \in [s_j-s_{j+1}))\{J_n(du)-J(du)\}|\\
&=2\varepsilon B_2+|\sum_{j=0}^{n_0-1}h_{j}\int_{s_j}^{s_{j+1}}\{J_n(du)-J(du)\}|\\
&\leq 2\varepsilon B_2+\sum_{j=0}^{n_0-1}| h_{j}| |J_n(s_{j+1})-J(s_{j+1})-J_n(s_j)+J(s_j)|\\
&\leq 2\varepsilon B_2+2\sum_{j=0}^{n_0-1}| h_{j}| \sup_{s\in T}| J_n(s)-J(s)|\\
&\to 2 \varepsilon B_2 \quad as \quad n \to \infty.
\end{align*}
This  in conjunction with \eqref{A6} and \eqref{A9}, we obtain \eqref{A4}. And from \eqref{A4} and \eqref{A7}, then \eqref{A5} holds.
\end{proof}

\vspace*{-10pt}

\appendixtwo
\section*{Appendix 2}
\subsection*{Detailed techniques for theorem proofs}

\begin{proof}[of Theorem~\ref{theorem1}]

This proof is basically same as the proof of Lemma 2.2 of \citet{Hardle:1988} and Theorem 2.1 of \citet{Zhao:1994}. The major difference is that we have to treat a vector parameter $\bm{\beta^{\ast}}=(\beta(t),\beta'(t),\ldots,\beta^{(p)}/p!)^{\T}$ due to the local polynomial estimation. Next up, we will show detailed proof procedure by the below two lemmas. Introduce some notations as follows: 
\begin{equation}
\label{B1}
G_{\alpha_{k}n1}(t, t+s)=n^{-1}\sum_{i=1}^{n}\int_{0}^{\tau}I(C_{i}\geq u)I(t<u<t+s)((u-t)/h)^{k}z_{i}d\widetilde{N}_{i}(u),
\end{equation}
and
\[G_{\alpha_{k}1}(t, t+s)=E(G_{\alpha_{k}n1}(t, t+s));\]
\begin{align}
\label{B2}
&G_{\alpha_{k}n2}(t, t+s)=n^{-1}\sum_{i=1}^{n}\int_{0}^{\tau}I(C_{i}\geq u)I(t<u<t+s)((u-t)/h)^{k}(S_{n,1}(u,\bm{\alpha})/S_{n,0}(u,\bm{\alpha}))\nonumber\\
& d\widetilde{N}_{i}(u),
\end{align}
and
\[
 G_{\alpha_{k}2}(t, t+s)=E(G_{\alpha_{k}n2}(t, t+s));
\]
and for $c> 0$,
\begin{equation}
\label{B3}
V_{\alpha_{k}n1}(t, c)=\sup_{|s| \leq c}|G_{\alpha_{k}n1}(t, t+s)-G_{\alpha_{k}1}(t, t+s)|,
\end{equation}
\begin{equation}
\label{B4}
V_{\alpha_{k}n2}(t, c)=\sup_{|s| \leq c}|G_{\alpha_{k}n2}(t, t+s)-G_{\alpha_{k}2}(t, t+s)|,
\end{equation}
where $S_{n,j}(u,\bm{\alpha}), j=0,1$ defined in Appendix 1, and $\alpha_{k}$ is the $k$th component of $\bm{\alpha}$. Note that $\sup\{\alpha_{k}\}=\bar{\alpha}_{k}$, and $\inf\{\alpha_{k}\}=\underline{\alpha}_{k}$.
\begin{lemma}
\label{lemB1}
Let $0< c_{n} \to 0$, as $n\to \infty$, and $1<c_{n}^{-1}\leq (n/\log n)^{1-2/\lambda}$, then almost surely ($a.s.$),
\begin{equation}
\label{B5}
V_{n1}=\sup_{t\in T}\sup_{\alpha_{k} \in \mathcal{N}_{0}} V_{\alpha_{k}n1}(t, c_{n})=O(n^{-1/2}(c_{n}\log n)^{1/2}), \quad as \quad n\to \infty,
\end{equation}
and
\begin{equation}
\label{B6}
V_{n2}=\sup_{t\in T}\sup_{\alpha_{k} \in \mathcal{N}_{0}} V_{\alpha_{k}n2}(t, c_{n})=O(n^{-1/2}(c_{n}\log n)^{1/2}), \quad as \quad n\to \infty.
\end{equation}
where $\mathcal{N}_{0}:=\{\alpha_{k}:|\alpha_{k}-0|<\epsilon\}$.
\end{lemma}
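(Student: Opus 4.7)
The plan is a three-step empirical-process argument: truncate the unbounded covariate, apply Bernstein's inequality at each frozen parameter value, and pass to a uniform bound via discretization of the parameter space $T \times [-c_n, c_n] \times \mathcal{N}_0$. The bandwidth condition $c_n^{-1} \le (n/\log n)^{1-2/\lambda}$ should turn out to be precisely what makes these three pieces fit together.

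The first step is truncation. Using $E|Z|^\lambda<\infty$ from Condition~2, I would set $L_n = (n/\log n)^{1/\lambda}$ and truncate $z_i$ to $z_i I(|z_i|\le L_n)$. A Markov / Borel--Cantelli calculation then shows that the residual contribution to $G_{\alpha_k n j}-G_{\alpha_k j}$ is almost surely $o(n^{-1/2}(c_n\log n)^{1/2})$, exactly under the stated bandwidth condition. After truncation the summands
\[
\xi_i = \int_0^\tau I(C_i\ge u)\,I(t<u<t+s)\,((u-t)/h)^k z_i I(|z_i|\le L_n)\,d\widetilde N_i(u)
\]
and their analogues for $G_{\alpha_k n2}$ are independent and identically distributed, centred, bounded by $O(L_n)$ using boundedness of $N$ and $O$ from Condition~2, and of variance $O(c_n)$ because the indicator $I(t<u<t+s)$ restricts integration to a window of length at most $c_n$ and $E(Z^2)<\infty$ follows from $\lambda>2$. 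Bernstein's inequality then yields
\[
\mathrm{pr}\bigl(|G_{\alpha_k n1}(t,t+s) - G_{\alpha_k 1}(t,t+s)|>x\bigr) \le 2\exp\Bigl(-\frac{n x^2}{2(A_1 c_n + A_2 L_n x)}\Bigr),
\]
and for $x = \rho(c_n\log n/n)^{1/2}$ with $\rho$ large, the bandwidth condition forces $L_n x = O(c_n)$, so the right-hand side is at most $n^{-\rho^2/(2A_1)}$.

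The second step is discretization. To replace the pointwise bound by the desired uniform statement, I would grid $T$, the window $[-c_n,c_n]$ and, for (\ref{B6}), the neighbourhood $\mathcal{N}_0$ on meshes of polynomial cardinality $O(n^a)$ in each coordinate. Because the integrand is of bounded variation in $u$ by Conditions~2--4, and because the ratio $S_{n,1}(u,\bm\alpha)/S_{n,0}(u,\bm\alpha)$ in $G_{\alpha_k n2}$ is uniformly bounded and Lipschitz in $\bm\alpha$ on $\mathcal{N}_0$ (using Lemma~\ref{lemA1} together with the positive lower bound on $S_0$ provided by $q_0>0$ in Condition~5), the oscillation of the process between adjacent grid points is $O(n^{-a})$ and is negligible compared with the target rate for any fixed $a>1$. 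A union bound over the $O(n^{3a})$ grid points multiplies the Bernstein tail by $n^{3a}$, and choosing $\rho$ large enough keeps the aggregate tail summable in $n$; Borel--Cantelli then delivers the claimed almost-sure rate for both $V_{n1}$ and $V_{n2}$.

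The main obstacle I anticipate is the tight interplay between the three scales $L_n$, $x$ and $n^{-a}$: the hypothesis $c_n^{-1}\le (n/\log n)^{1-2/\lambda}$ is the single inequality that simultaneously guarantees both that the truncation tail is dominated by $x$ and that the linear $L_n x$ term in the Bernstein exponent is dominated by the variance term $c_n$. When $\lambda$ is close to $2$ these two requirements become nearly incompatible, and the bookkeeping will need to be carried out carefully. The additional layer needed for (\ref{B6}), namely the covering in $\alpha_k$, is by comparison a routine refinement of the same template once Lemma~\ref{lemA1} has been applied to the ratio $S_{n,1}/S_{n,0}$.
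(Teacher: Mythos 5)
Your plan --- truncate the covariate, apply Bernstein's inequality at frozen parameter values, then discretize $T\times[-c_n,c_n]\times\mathcal{N}_{0}$ before a union bound and Borel--Cantelli --- is exactly the route the paper takes, the only substantive difference being the truncation level ($Q_{n}=(c_{n}n/\log n)^{1/2}$ in the paper versus your $(n/\log n)^{1/\lambda}$), and both choices exploit the hypothesis $c_{n}^{-1}\leq(n/\log n)^{1-2/\lambda}$ in the same two places you identify. The one detail to watch is that the oscillation of the \emph{empirical} part between adjacent $s$- and $\alpha_{k}$-grid points is controlled in the paper by monotone sandwiching (bounding the increment by the centred values at the two endpoints plus the increment of the deterministic mean) rather than by a Lipschitz bound on the random process, which has jumps; this is the standard fix and does not change your architecture.
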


\begin{proof}
Since $V_{\alpha_{k}n1}$ is a special case of $V_{\alpha_{k}n2}$, when substituted $S_{n,1}(u,\bm{\alpha})/S_{n,0}(u,\bm{\alpha})$ by $z_{i}$. We only need to prove \eqref{B6}. Put
 \[a_{n}=n^{-1/2}(c_{n}\log n)^{1/2}.\]
 As we can treat the positive and negative part of $z_{i}$, separately, we assume that $z_{i}$ is nonnegative. First, we reduce $\sup_{\alpha_{k}\in\mathcal{N}_{0}}$ in \eqref{B6} to a maximum on a finite set. We use finite points $b_{1}<b_{2}<\ldots<b_{N_{n}}$ to partition $\mathcal{N}_{0}$, such that $b_{1}-\underline{\alpha}_{k}\leq a_{n}$, $\bar{\alpha}_{k}-b_{N_{n}}\leq a_{n}$, and $b_{j}-b_{j-1}\leq a_{n}$, for $2\leq j \leq N_{n}$. Further, we assume that 
 \begin{equation}
 \label{B7}
 N_{n}\leq 2(\bar{\alpha}_{k}-\underline{\alpha}_{k})/a_{n},
 \end{equation}
 and for any $t \in T$, and $|s|\leq c_{n}$, by Cauchy-Schwarz inequality, the functions $G_{\alpha_{k}n2}(t, t+s)$ and $G_{\alpha_{k}2}(t, t+s)$ are monotone in $\alpha_{k}$. Letting $J_{n}$ denote the set $\{ \underline{\alpha}_{k},b_{1},\ldots,b_{N_{n}},\bar{\alpha}_{k}\}$, and $J_{n}^{\ast}$ the set $\{(\underline{\alpha}_{k},b_{1}),(b_{1},b_{2}),\ldots,(b_{N_{n}},\bar{\alpha}_{k})\}$. Hence, we have, for any $\alpha_{k}\in \mathcal{N}_{0}$,
 \begin{align*}
&G_{b_{k}n2}(t, t+s)-G_{b_{k}2}(t, t+s)+G_{b_{k}2}(t, t+s)-G_{b_{k+1}2}(t, t+s)\\
& \leq G_{\alpha_{k}n2}(t, t+s)-G_{\alpha_{k}2}(t, t+s)\\
& \leq G_{b_{k+1}n2}(t, t+s)-G_{b_{k+1}2}(t, t+s)+G_{b_{k+1}2}(t, t+s)-G_{b_{k}2}(t, t+s).
\end{align*}
Thus
\begin{align*}
&|G_{\alpha_{k}n2}(t, t+s)-G_{\alpha_{k}2}(t, t+s)|\\
&\leq \max_{\alpha_{k}\in J_{n}}|G_{\alpha_{k}n2}(t, t+s)-G_{\alpha_{k}2}(t, t+s)|+\max_{(\alpha'_{k},\alpha''_{k})\in J_{n}^{\ast}}|G_{\alpha''_{k}2}(t, t+s)-G_{\alpha'_{k}2}(t, t+s)|.
\end{align*}
For $\alpha'_{k}<\alpha''_{k}$,
\begin{align*}
&G_{\alpha''_{k}2}(t, t+s)-G_{\alpha'_{k}2}(t, t+s)|\\
& =|\int_{0}^{\tau}I(t<u<t+s)((u-t)/h)^{k}S_{0}^{\ast}(u,\beta(u))\{S_{2}(u,\bm{\alpha})/S_{0}(u,\bm{\alpha})-(S_{1}(u,\bm{\alpha})/S_{0}(u,\bm{\alpha}))^{2}\}(\alpha''_{k}\\
&-\alpha'_{k})du|\\
&\leq |\int_{0}^{\tau}I(t<u<t+s)((u-t)/h)^{k}M_{0}(\alpha''_{k}-\alpha'_{k})du|\\
&\leq M_{0}a_{n},
\end{align*}
there exists some positive constant $M_{0}$ satisfied the upper inequality, under Conditions 1-5.\\
Hence
\begin{equation}
\label{B8}
V_{n2}\leq \sup_{t\in T}\max_{\alpha_{k}\in J_{n}}V_{\alpha_{k}n2}(t, c_{n})+M_{0}a_{n}.
\end{equation}
Next, we reduce $\sup_{t\in T}$ to a maximum on a finite set.  Now we partition $T$ by an equally-spaced grid $I_{n}:=\{t_{k}: t_{k}=k c_{n}, k=0,\ldots, [\tau/c_{n}]\}$, with $t_{[\tau/c_{n}]+1}=\tau$, where $[\cdot]$ denote the greatest integer part. For any $t\in T$ and $|s|\leq c_{n}$, there exists a grid point $t_{k}$, such that both $t$ and $t+s$ are between $t_{k}$ and $t_{k+1}$. And
\begin{align*}
&|G_{\alpha_{k}n2}(t, t+s)-G_{\alpha_{k}2}(t, t+s)|\\
&\leq |G_{\alpha_{k}n2}(t_{k}, t+s)-G_{\alpha_{k}2}(t_{k}, t+s)|+|G_{\alpha_{k}n2}(t_{k}, t)-G_{\alpha_{k}2}(t_{k}, t)|.
\end{align*}
Then, we obtain
\[
|G_{\alpha_{k}n2}(t, t+s)-G_{\alpha_{k}2}(t, t+s)|\leq 2\max_{t\in I_{n}}V_{\alpha_{k}n2}(t, c_{n}).
\]
Thus 
\begin{equation}
\label{B9}
V_{n2}\leq 2\max_{t\in I_{n}}\max_{\alpha_{k}\in J_{n}}V_{\alpha_{k}n2}(t, c_{n})+2M_{0}a_{n}.
\end{equation}

In order to apply Bernstein's inequality, we truncate $\{z_{i}\}$ by some value, and define $V^{\ast}_{\alpha_{k}n2}(t, c_{n})$ similar to $V_{\alpha_{k}n2}(t, c_{n})$. Put
\[
Q_{n}=c_{n}/a_{n},
\]
and 
\begin{align*}
&G^{\ast}_{\alpha_{k}n2}(t, t+s)=n^{-1}\sum_{i=1}^{n}\int_{0}^{\tau}I(C_{i}\geq u)((u-t)/h)^{k}I(t<u<t+s)\{\sum_{j=1}^{n}I(C_{j}\geq u)\exp(\bm{\alpha}^{\T}\tilde{z}_{j}(\bm{u})\\
&+\bm{\beta}^{\ast \T}z_{j}(\bm{u}))z_{j}I(z_{j}\leq Q_{n})o_{j}(u)/S_{n,0}(u,\bm{\alpha})\}d\widetilde{N}_{i}(u),
\end{align*}
and 
\[
G^{\ast}_{\alpha_{k}2}(t, t+s)=E(G^{\ast}_{\alpha_{k}n2}(t, t+s)).
\]
Likewise, we have
\[
V^{\ast}_{\alpha_{k}n2}(t, c_{n})=\sup_{|s| \leq c_{n}}|G^{\ast}_{\alpha_{k}n2}(t, t+s)-G^{\ast}_{\alpha_{k}2}(t, t+s)|,\]
\[
V^{\ast}_{n2}= \max_{t\in I_{n}}\max_{\alpha_{k}\in J_{n}}V^{\ast}_{\alpha_{k}n2}(t, c_{n}).\]
Thus
\begin{equation}
\label{B10}
V_{n2}\leq V^{\ast}_{n2}+2M_{0}a_{n}+2A_{n1}+2A_{n2},
\end{equation}
where 
\begin{equation*}
A_{n1}=\sup_{t\in I_{n}}\sup_{\alpha_{k}\in J_{n}}\sup_{|s|\leq c_{n}}(G_{\alpha_{k}n2}(t, t+s)-G^{\ast}_{\alpha_{k}n2}(t, t+s)),
\end{equation*}
\begin{equation*}
A_{n2}=\sup_{t\in I_{n}}\sup_{\alpha_{k}\in J_{n}}\sup_{|s|\leq c_{n}}(G_{\alpha_{k}2}(t, t+s)-G^{\ast}_{\alpha_{k}2}(t, t+s)).
\end{equation*}
For
\begin{equation}
\begin{split}
\label{B11}
&G_{\alpha_{k}n2}(t, t+s)-G^{\ast}_{\alpha_{k}n2}(t, t+s))\\
&=n^{-1}\sum_{i=1}^{n}\int_{0}^{\tau}I(C_{i}\geq u)((u-t)/h)^{k}I(t<u<t+s)\{\sum_{j=1}^{n}I(C_{j}\geq u)\exp(\bm{\alpha}^{\T}\tilde{z}_{j}(\bm{u})+\bm{\beta}^{\ast \T}z_{j}(\bm{u}))\\
&z_{j}I(z_{j}> Q_{n})o_{j}(u)/S_{n,0}(u,\bm{\alpha})\}d\widetilde{N}_{i}(u)\\
&\leq Q_{n}^{1-\lambda}n^{-1}\sum_{i=1}^{n}\int_{0}^{\tau}I(C_{i}\geq u)\{\sum_{j=1}^{n}I(C_{j}\geq u)\exp(\bm{\alpha}^{\T}\tilde{z}_{j}(\bm{u})+\bm{\beta}^{\ast \T}z_{j}(\bm{u}))z_{j}^{\lambda}o_{j}(u)/S_{n,0}(u,\bm{\alpha})\}\\
&d\widetilde{N}_{i}(u).
\end{split}
\end{equation}
We have, by the classical strong low of large numbers and Lemma~\ref{lemA1},
\begin{align}
\label{B12}
&n^{-1}\sum_{i=1}^{n}\int_{0}^{\tau}I(C_{i}\geq u)\{\sum_{j=1}^{n}I(C_{j}\geq u)\exp(\bm{\alpha}^{\T}\tilde{z}_{j}(\bm{u})+\bm{\beta}^{\ast \T}z_{j}(\bm{u}))z_{j}^{\lambda}o_{j}(u)/S_{n,0}(u,\bm{\alpha})\}d\widetilde{N}_{i}(u) \to \nonumber \\
&\int_{0}^{\tau}S^{\ast}_{0}(u,\beta(u))E(p_{1}(u \mid z)p_{2}(u \mid z)\exp(\bm{\alpha}^{\T}\tilde{z}(\bm{u})+\bm{\beta}^{\ast\T}z(\bm{u}))z^{\lambda})/S_{0}(u,\bm{\alpha})du<\infty, \quad a.s. 
\end{align} 
Noting that 
\begin{equation}
\label{B13}
a_{n}^{-1}Q_{n}^{1-\lambda}=(c_{n}^{-1}(\log n/n)^{1-2/\lambda})^{\lambda/2}=o(1).
\end{equation}
From \eqref{B11}, \eqref{B12} and \eqref{B13}, we have, as $n\to \infty$,
\begin{equation}
\label{B14}
a_{n}^{-1}A_{n1} \to 0, \quad a.s. 
\end{equation}
From \eqref{B11}, \eqref{B12}, \eqref{B13} and $A_{n2}\leq E(A_{n1})$, then, as $n\to \infty$,
\begin{equation}
\label{B15}
a_{n}^{-1}A_{n2} \to 0, \quad a.s.
\end{equation}
Then, combining \eqref{B10}, \eqref{B14} and \eqref{B15}, it suffices for \eqref{B6} to show
\begin{equation}
\label{B16}
V_{n2}^{\ast}=O(a_{n}) \quad a.s.
\end{equation}
Next we will find a suitable upper bound for $pr(V_{n2}^{\ast}\geq B_{0}a_{n})$ by appropriate choice of $B_{0}$. Now we perform a further partition for $V_{\alpha_{k}n2}^{\ast}(t, c_{n})$ at a fixed $t\in I_{n}$. Set $w_{n}=[(Q_{n}c_{n}/a_{n})+1]$, and $s_{r}=r c_{n}/w_{n}$, for $r= -w_{n},-w_{n}+1,\ldots, w_{n}$. Since $G_{\alpha_{k}n2}^{\ast}(t, t+s)$ and $G_{\alpha_{k}2}^{\ast}(t, t+s)$ are monotone in $|s|$, suppose that $0\leq s_{r}\leq s\leq s_{r+1}$, then
\begin{align*}
&G_{\alpha_{k}n2}^{\ast}(t, t+s_{r})-G_{\alpha_{k}2}^{\ast}(t, t+s_{r})+G_{\alpha_{k}2}^{\ast}(t, t+s_{r})-G_{\alpha_{k}2}^{\ast}(t, t+s_{r+1})\\
&\leq G_{\alpha_{k}n2}^{\ast}(t, t+s)-G_{\alpha_{k}2}^{\ast}(t, t+s)\\
&\leq G_{\alpha_{k}n2}^{\ast}(t, t+s_{r+1})-G_{\alpha_{k}2}^{\ast}(t, t+s_{r+1})+G_{\alpha_{k}2}^{\ast}(t, t+s_{r+1})-G_{\alpha_{k}2}^{\ast}(t, t+s_{r}),
\end{align*}
from which we obtain 
\[
|G_{\alpha_{k}n2}^{\ast}(t, t+s)-G_{\alpha_{k}2}^{\ast}(t, t+s)| \leq \max \{\xi_{n,r}, \xi_{n, r+1} \}+G_{\alpha_{k}2}^{\ast}(t+s_{r}, t+s_{r+1}),
\]
where
\[
\xi_{n,r}=|G_{\alpha_{k}n2}^{\ast}(t, t+s_{r})-G_{\alpha_{k}2}^{\ast}(t, t+s_{r})|.
\]
The same holds for $s_{r}\leq s\leq s_{r+1}\leq 0$. Therefore
\begin{equation}
\label{B17}
V_{\alpha_{k}n2}^{\ast}(t, c_{n})\leq \max_{-w_{n}\leq r \leq w_{n}} \xi_{n, r}+ \max_{-w_{n}\leq r \leq w_{n}-1}G_{\alpha_{k}2}^{\ast}(t+s_{r}, t+s_{r+1}).
\end{equation}
For all $r$, under Condition 5,
\[
G_{\alpha_{k}2}^{\ast}(t+s_{r}, t+s_{r+1}) \leq \int_{t+s_{r}}^{t+s_{r+1}} q_{0}(u)Q_{n}du\leq M_{3}Q_{n}(s_{r+1}-s_{r})\leq M_{3}a_{n},
\]
so that
\begin{equation}
\label{B18}
pr(V_{\alpha_{k}n2}^{\ast}(t, c_{n})\geq B_{0}a_{n})\leq pr(\max_{-w_{n}\leq r \leq w_{n}} \xi_{n, r}\geq (B_{0}-M_{3})a_{n}).
\end{equation}
Now, let 
\begin{align*}
&X_{i}=\int_{0}^{\tau}I(C_{i}\geq u)((u-t)/h)^{k}I(t<u<t+s)\{\sum_{j=1}^{n}I(C_{j}\geq u)\exp(\bm{\alpha}^{\T}\tilde{z}_{j}(\bm{u})+\bm{\beta}^{\ast \T}z_{j}(\bm{u}))z_{j}I(z_{j} \\
&\geq Q_{n})o_{j}(u)/S_{n,0}(u,\bm{\alpha})\}d\widetilde{N}_{i}(u),
\end{align*}
then 
\[
\xi_{nr}=|n^{-1}\sum_{i=1}^{n}\{X_{i}-E(X_{i})\}|.\]
For
\[
|X_{i}-E(X_{i})|\leq |\int_{0}^{\tau}I(C_{i}\geq u)((u-t)/h)^{k}I(t<u<t+s)Q_{n}d\widetilde{N}_{i}(u)|\leq \bar{N}Q_{n},\]
where $\bar{N}=\tau \sup_{u\in T} N_{i}(u)$.\\
And, for some constant $M_{4}$, we have
\begin{align*}
&\sum_{i=1}^{n}var(X_{i}) \leq \sum_{i=1}^{n}E(X_{i}^{2})\\
&\leq \sum_{i=1}^{n} \int_{0}^{\tau}I(t\leq u\leq t+s_{r})((u-t)/h)^{k}E(p_{1}(u \mid z)p_{2}(u \mid z)E(N^{2}(u)\mid z))\{E(p_{1}(u \mid z)p_{2}(u \mid z)\\
&\exp(\bm{\alpha}^{\T}\tilde{z}(\bm{u})+\bm{\beta}^{\ast\T}z(\bm{u}))zI(z\leq Q_{n}))/S_{0}(u,\bm{\alpha})\}^{2}du\\
&\leq \sum_{i=1}^{n}\int_{t+s_{r}}^{t+s_{r+1}}M_{4}du\leq nM_{4}c_{n}.
\end{align*}
Then, by Bernstein's inequality,
\begin{align*}
pr(\xi \geq (B_{0}-M_{3})a_{n}) &\leq \exp \{ -((B_{0}-M_{3})na_{n})^{2}/2(\sum_{i=1}^{n} var(X_{i})+3^{-1}(B_{0}-M_{0})\bar{N}Q_{n}na_{n})\}\\
& \leq \exp \{-((B_{0}-M_{3})na_{n})^{2}/2(M_{4}nc_{n}+3^{-1}(B_{0}-M_{0})\bar{N}Q_{n}na_{n})\}\leq n^{-B_{0}^{\ast}},
\end{align*}
where
\begin{equation}
\label{B19}
B_{0}^{\ast}=(B_{0}-M_{3})^{2}/2(M_{4}+3^{-1}(B_{0}-M_{3})\bar{N}).
\end{equation}
By \eqref{B18} and Boole's inequality,
\begin{equation}
\label{B20}
pr(\sup_{t\in I_{n}}\sup_{\alpha_{k}\in J_{n}}V_{\alpha_{k}n2}^{\ast}(t, c_{n})\geq B_{0}a_{n})\leq (N_{n}+2)([\tau/c_{n}]+1)2[(Q_{n}c_{n}/a_{n})+1]n^{-B_{0}^{\ast}},
\end{equation}
From \eqref{B7}, we obtain
\[
N_{n}+2\leq 2(\bar{\alpha}_{k}-\underline{\alpha}_{k})a_{n}^{-1}+2.
\]
And, obviously,
\[
[\tau/c_{n}]+1\leq(\tau+1)c_{n}^{-1}.
\]
Also,
\[
2[(Q_{n}c_{n}/a_{n})+1]\leq (2Q_{n}c_{n}/a_{n})+2\leq 2((c_{n}a_{n}^{-1})^{2}+1),
\]
since 
\[
(c_{n}a_{n}^{-1})^{2}=c_{n}n/\log n\geq c_{n}^{-2/(\lambda-2)}\geq 1,
\]
then, we have, 
 \[
2[(Q_{n}c_{n}/a_{n})+1]\leq 3c_{n}^{2}a_{n}^{-2}.
\]
Hence
\begin{equation}
\label{B21}
pr(V_{n2}^{\ast}\geq B_{0}a_{n})\leq 2(\bar{\alpha}_{k}-\underline{\alpha}_{k}+1)(\tau+1)3c_{n}a_{n}^{-3}n^{-B_{0}^{\ast}}
\leq \bar{M}_{0}(n/\log n)^{(2\lambda-1/\lambda)}n^{-B_{0}^{\ast}},
\end{equation}
for some constant $\bar{M}_{0}$.

Given $\lambda$ and real $\kappa>0$, we choose a suitable $B_{0}$ denoted as $B_{\kappa,\lambda}$ to make the constant $B_{0}^{\ast}$ in \eqref{B19} satisfies
\[
B_{0}^{\ast}\geq \kappa+(2\lambda -1)/\lambda.
\]
And using $(2\lambda-1)/\lambda=2-1/\lambda>1$, for $\lambda>2$, then \eqref{B21} yields 
\begin{equation}
\label{B22}
pr(V_{n2}^{\ast}\geq B_{\kappa,\lambda}a_{n})\leq \bar{M}_{0}(\log n)^{-1}n^{-\kappa}.
\end{equation}
When $\kappa\geq 2$ in \eqref{B22}, $pr(V_{n2}^{\ast}\geq B_{\kappa,\lambda}a_{n})$ is summable in $n$. So, applying the Borel-Cantelli lemma,
\begin{equation}
\label{B23}
V_{n2}^{\ast}=O(a_{n}), \quad a.s.
\end{equation}
Thus, form \eqref{B10}, \eqref{B14}, \eqref{B15} and \eqref{B23}, we have \[V_{n2}=O(a_{n}),\quad a.s.\]
Similarly, we can also prove $V_{n1}=O(a_{n}), \quad a.s.$
\end{proof}

\begin{lemma}
\label{lemB2}
Let $h$ be a bandwidth and $c_{n}=2h$. Assume that $h\to 0$ and $h^{-1}(\log n/n)^{1-2/\lambda}=o(1)$, let
\begin{equation}
\label{B24}
U_{nk}(\bm{\alpha})=n^{-1}\sum_{i=1}^{n}\int_{0}^{\tau}I(C_{i}\geq u)K_{h}(u-t)((u-t)/h)^{k}\{z_{i}-S_{n,1}(u,\bm{\alpha})/S_{n,0}(u,\bm{\alpha})\}d\widetilde{N}_{i}(u),
\end{equation}
Then we have
\begin{equation}
\label{B25}
\sup_{t\in T}\sup_{\alpha_{k}\in \mathcal{N}_{0}}(nh/\log n)^{1/2}|U_{nk}(\bm{\alpha})-E(U_{nk}(\bm{\alpha}))|=O(1),\quad a.s.
\end{equation}
\end{lemma}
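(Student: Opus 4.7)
The plan is to reduce the claim to the uniform-increment bound already established in Lemma~\ref{lemB1}. The key structural observation is that $K_h(u-t)$ is supported on $[t-h,t+h]$ and, by Condition~1, vanishes at the endpoints of that interval, so Stieltjes integration by parts converts the kernel-weighted integral $U_{nk}(\bm{\alpha})$ into an integral of the centered running-sum processes against a signed measure of total mass $O(h^{-1})$ on a short window of length $2h$. Applying Lemma~\ref{lemB1} with window width $c_{n}=2h$ will then produce the desired rate $(\log n/(nh))^{1/2}$.

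First I would absorb the polynomial weight into the kernel by setting $\tilde K_{h}(v):=K_{h}(v)(v/h)^{k}$; this function is still of bounded variation on $[-h,h]$, still vanishes at $\pm h$, and has total variation $h^{-1}\,\mathrm{TV}_{[-1,1]}(K(\cdot)(\cdot)^{k})=O(h^{-1})$ by a change of variables. Decompose $U_{nk}(\bm{\alpha})=A_{n}-B_{n}(\bm{\alpha})$, with
\begin{align*}
A_{n}&=n^{-1}\sum_{i}\int \tilde K_{h}(u-t)\,I(C_{i}\geq u)\,z_{i}\,d\widetilde N_{i}(u),\\
B_{n}(\bm{\alpha})&=n^{-1}\sum_{i}\int \tilde K_{h}(u-t)\,I(C_{i}\geq u)\,\frac{S_{n,1}(u,\bm{\alpha})}{S_{n,0}(u,\bm{\alpha})}\,d\widetilde N_{i}(u).
\end{align*}
Writing $A_{n}=\int_{-h}^{h}\tilde K_{h}(s)\,d\Psi_{n}(s)$ for the running sum $\Psi_{n}(s):=n^{-1}\sum_{i}\int_{t-h}^{t+s}I(C_{i}\geq u)\,z_{i}\,d\widetilde N_{i}(u)$, Stieltjes integration by parts together with $\tilde K_{h}(\pm h)=0$ yields $A_{n}=-\int_{-h}^{h}\Psi_{n}(s)\,d\tilde K_{h}(s)$, and the same identity holds with $\Psi_{n}$ replaced by $E(\Psi_{n})$.

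Subtracting these identities and bounding the resulting Stieltjes integral by $\mathrm{TV}(\tilde K_{h})\cdot\sup_{s\in[-h,h]}|\Psi_{n}(s)-E\Psi_{n}(s)|$, I would recognize the supremum as an instance of $V_{n1}$ from Lemma~\ref{lemB1} taken at reference point $t-h$, window $c_{n}=2h$, and polynomial index $k=0$. The assumption $h^{-1}(\log n/n)^{1-2/\lambda}=o(1)$ guarantees the bandwidth condition $c_{n}^{-1}\leq (n/\log n)^{1-2/\lambda}$ required by Lemma~\ref{lemB1}, so $V_{n1}=O((h\log n/n)^{1/2})$ almost surely. Multiplying by $\mathrm{TV}(\tilde K_{h})=O(h^{-1})$ gives $|A_{n}-E(A_{n})|=O((\log n/(nh))^{1/2})$ a.s., uniformly in $t\in T$. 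The identical argument for $B_{n}(\bm{\alpha})$ invokes $V_{n2}$ instead of $V_{n1}$, and the uniformity over $\alpha_{k}\in\mathcal{N}_{0}$ is inherited directly from the statement of Lemma~\ref{lemB1}. Combining the two contributions produces $\sup_{t,\alpha_{k}}|U_{nk}(\bm{\alpha})-E(U_{nk}(\bm{\alpha}))|=O((\log n/(nh))^{1/2})$ a.s., equivalent to the stated conclusion.

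The main technical obstacle is the clean justification of the Stieltjes integration by parts when both $\tilde K_{h}$ and the jump process $\widetilde N_{i}$ may be discontinuous, together with the boundary handling when $t$ is within $h$ of $0$ or $\tau$. Both are dispatched by the standard left/right-limit convention for Lebesgue--Stieltjes integrals, in which any common-jump correction contributes at a strictly smaller order than the leading bound, and by restricting the effective domain to $[\max(0,t-h),\min(\tau,t+h)]$ without altering the rate. The other potentially delicate point---that $B_{n}(\bm{\alpha})$ depends on the full parameter vector through the ratio $S_{n,1}/S_{n,0}$ rather than on $\alpha_{k}$ alone---is already absorbed into Lemma~\ref{lemB1}, since $V_{n2}$ is defined with $S_{n,1}/S_{n,0}$ precisely as it appears here.
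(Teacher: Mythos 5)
Your proposal is correct and follows essentially the same route as the paper: both arguments convert the kernel-weighted integral into an integral of the centered running-sum processes against the variation measure of the kernel (the paper via the Fubini identity $U_{nk}(\bm{\alpha})=\int_{-h}^{h}\{G_{\alpha_{k}n1}(t+v,t+h)-G_{\alpha_{k}n2}(t+v,t+h)\}\,dK_{h}(v)$ using the monotone decomposition $K=K_{1}-K_{2}$ with $K(\pm1)=0$, you via Stieltjes integration by parts), then invoke Lemma~\ref{lemB1} with $c_{n}=2h$ and multiply the resulting increment bound by the total variation $O(h^{-1})$ of the rescaled kernel. The only cosmetic difference is that you absorb the polynomial weight $((u-t)/h)^{k}$ into a modified kernel $\tilde K_{h}$ and use the $k=0$ running sums, whereas the paper keeps that weight inside the $G$-processes exactly as Lemma~\ref{lemB1} is stated.
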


\begin{proof}
Since $K(\cdot)$ is bounded variation function, so we can write $K(\cdot)=K_{1}(\cdot)-K_{2}(\cdot)$, where $K_{1}(\cdot)$ and $K_{2}(\cdot)$ are both increasing functions. Without loss of generality, suppose that $K_{1}(-1)=K_{2}(-1)=0$. Next up, we apply Lemma~\ref{lemB1} by letting $c_{n}=2h$. It is clear that the assumption of Lemma~\ref{lemB1} hold here. Write
\begin{align*}
U_{nk}(\bm{\alpha})&=\int_{-h}^{h}\{n^{-1}\sum_{i=1}^{n}\int_{0}^{\tau}I(C_{i}\geq u)I(v<u-t<h)((u-t)/h)^{k}(z_{i}-S_{n,1}(u,\bm{\alpha})/S_{n,0}(u,\bm{\alpha}))\\
&\quad d\widetilde{N}_{i}(u)\}dK_{h}(v)\\
&=\int_{-h}^{h}\{ G_{\alpha_{k}n1}(t+v, t+h)-G_{\alpha_{k}n2}(t+v, t+h)\}dK_{h}(v),
\end{align*}
where $G_{\alpha_{k}n1}$ and $G_{\alpha_{k}n2}$ defined as \eqref{B1} and \eqref{B2}, respectively. So, we have 
\begin{align*}
\sup_{t\in T}\sup_{\alpha_{k}\in \mathcal{N}_{0}}|U_{nk}(\bm{\alpha})-E(U_{nk}(\bm{\alpha}))|&\leq \sup_{t\in T}\sup_{\alpha_{k}\in \mathcal{N}_{0}} \{V_{\alpha_{k}n1}(t,2h)+V_{\alpha_{k}n2}(t,2h)\}\int_{-h}^{h}dK_{h}(v)\\
&\leq (K_{1}(1)+K_{2}(1))h^{-1}\sup_{t\in T}\sup_{\alpha_{k}\in \mathcal{N}_{0}} \{V_{\alpha_{k}n1}(t,2h)+V_{\alpha_{k}n2}(t,2h)\}.
\end{align*}
Hence, by the consequence of Lemma~\ref{lemB1}, we can derive
\begin{equation}
\label{B26}
\sup_{t\in T}\sup_{\alpha_{k}\in \mathcal{N}_{0}}|U_{nk}(\bm{\alpha})-E(U_{nk}(\bm{\alpha}))|=O((\log n/(nh))^{1/2}), \quad a.s.
\end{equation}
Thus establishing ~\eqref{B25}.
\end{proof}

Next, we will prove Theorem~\ref{theorem1}.
Since $\bm{\alpha}=H(\bm{\beta}-\bm{\beta}^{\ast})$ and $\alpha_{k}(t)=\alpha_{k}=h^{k}(\beta_{k}(t)-\beta^{(k)}(t)/k!)$ defined in Appendix 1, from \eqref{L3}, we have
\begin{equation}
\label{B27}
\mathcal{L}_{n}(\bm{\alpha})=n^{-1}\sum_{i=1}^{n}\int_{0}^{\tau}I(C_{i}\geq u)K_{h}(u-t)\{\bm{\alpha}^{\T}\tilde{z}_{i}(\bm{u})+\bm{\beta}^{\ast \T}z_{i}(\bm{u})-\log S_{n,0}(u,\bm{\alpha})\}d\widetilde{N}_{i}(u),
\end{equation}
and 
\begin{align*}
U_{nk}(\bm{\alpha})&=\partial \mathcal{L}_{n}(\bm{\alpha})/\partial \alpha_{k}\\
&=n^{-1}\sum_{i=1}^{n}\int_{0}^{\tau}I(C_{i}\geq u)K_{h}(u-t)((u-t)/h)^{k}\{z_{i}-S_{n,1}(u,\bm{\alpha})/S_{n,0}(u,\bm{\alpha})\}d\widetilde{N}_{i}(u).
\end{align*}
By the assumption of Condition 3, we have $w(h)=\sup_{|t-t'|\leq h}|\alpha_{k}(t)-\alpha_{k}(t')|=O(h)$.  In this, we consider $\alpha_{k}$ in the neighborhood of zero, that is $\alpha_{k}\in \mathcal{N}_{0}$. And we take $\epsilon=\epsilon_{k}=\max\{2w(h),6l_{n}/(\mu_{2k}M_{1})\}$. Now, we consider $\alpha_{k}\in (-\epsilon_{k},\epsilon_{k})$, without loss of generality, we assume $\epsilon_{k}<1$. Define 
\begin{equation}
\label{B28}
U_{nk}(\epsilon_{k})=n^{-1}\sum_{i=1}^{n}\int_{0}^{\tau}I(C_{i}\geq 0)K_{h}(u-t)((u-t)/h)^{k}(z_{i}-S_{n,1}(\epsilon_{k},u)/S_{n,0}(\epsilon_{k},u))d\widetilde{N}_{i}(u),
\end{equation}
with
\[
S_{n,j}(\epsilon_{k},u)=\sum_{i=1}^{n}I(C_{i}\geq 0)\exp(\epsilon_{k}z_{i}((u-t)/h)^{k}+\bm{\beta}^{\ast \T}z_{i}(\bm{u}))o_{i}(u)z_{i}^{j}, \quad  j=0,1,2.
\]
So by Lemma~\ref{lemB1} and Lemma~\ref{lemB2}, we have (as $n\to \infty$) $a.s.$, for any $ t \in T$,
\begin{equation}
\label{B29}
|U_{n k}(\pm \epsilon_{k})-E(U_{n k}(\pm \epsilon_{k}))|\leq l_{n},
\end{equation}
where $l_{n}=O((\log n/(n h))^{1/2})$.

Under conditions 1-5, and by Lemma~\ref{lemA1}, we have,
\begin{equation}
\label{B30}
E(U_{nk}(\epsilon_{k}))=\int_{0}^{\tau}K_{h}(u-t)((u-t)/h)^{k}\{q_{1}(u)-q_{0}(u)S_{1}(\epsilon_{k},u)/S_{0}(\epsilon_{k},u)\}du,
\end{equation}
where
\begin{align*}
&q_{j}(u)=E(p_{1}(u \mid z)p_{2}(u \mid z)\mu_{0}(u)\exp(\beta(u)z)z^{j}),\quad j=0,1,2.\\
&S_{j}(\epsilon_{k},u)=E(p_{1}(u \mid z)p_{2}(u \mid z)\exp(\epsilon_{k}z((u-t)/h)^{k}+\bm{\beta}^{\ast \T}z(\bm{u}))z^{j}),\quad j=0,1,2.
\end{align*}
Let $(u-t)/h=v$, and $h$ sufficiently small, by Taylor expansion, we have,
\begin{align*}
&E(U_{nk}(\epsilon_{k},u))= \int K(v)v^{k}\{q_{1}(t)-q_{0}(t)E(p_{1}(t \mid z)p_{2}(t \mid z)z\exp(\epsilon_{k}zv^{k}+\beta(t)z))/E(p_{1}(t \mid z)\\
&p_{2}(t\mid z)\exp(\epsilon_{k}zv^{k}+\beta(t)z))\}dv+O(h).
\end{align*}
For
\[
\exp(\epsilon_{k}zv^{k}+\beta(t)z)=\exp(\beta(t)z) \exp(\epsilon_{k}v^{k}z)= \exp(\beta(t)z)(1+\epsilon_{k}v^{k}z+o(\epsilon_{k})).\]
Then
\begin{align*}
E(U_{nk}(\epsilon_{k})) &= \int K(v)v^{k}\{q_{1}(t)-q_{0}(t)(q_{1}(t)+q_{2}(t)\epsilon_{k}v^{k})/(q_{0}(t)+q_{1}(t)\epsilon_{k}v^{k})\}dv+o(\epsilon_{k}).\\
&=-\int K(v)v^{2k}\sigma_{1}(t)\epsilon_{k}/(1+o(\epsilon_{k})+\epsilon_{k}v^{k}q_{1}(t)/q_{0}(t))dv.
\end{align*}
Similarly,
\[E(U_{nk}(-\epsilon_{k}))= \int K(v)v^{2k}\sigma_{1}(t)\epsilon_{k}/(1+o(\epsilon_{k})-\epsilon_{k}v^{k}q_{1}(t)/q_{0}(t))dv.\]
Hence, under Condition 5, we have,
\begin{equation}
\label{B31}
E(U_{nk}(\epsilon_{k}))\leq -3^{-1}\mu_{2k}M_{1}\epsilon_{k},
\end{equation}
and 
\begin{equation}
\label{B32}
E(U_{nk}(-\epsilon_{k}))\geq 3^{-1}\mu_{2k}M_{1}\epsilon_{k}.
\end{equation}
Therefore, combing \eqref{B27}, \eqref{B29} and \eqref{B30}, we obtain that (as $n\to \infty$) $a.s.$, for any $t \in T$,
\[
U_{nk}(\epsilon_{k})\leq l_{n}-3^{-1}\mu_{2k}M_{1}\epsilon_{k}<0,
\]
and
\[ U_{nk}(-\epsilon_{k})\geq -l_{n}+3^{-1}\mu_{2k}M_{1}\epsilon_{k}>0.\]
Then the two above inequalities imply that $a.s.$, for any $t \in T$, there exists $\widehat{\alpha}_{k}(t)=\widehat{\alpha}_{k} \in (-\epsilon_{k},\epsilon_{k})$, such that $U_{nk}(\widehat{\alpha}_{k}(t))=0$, and $\widehat{\alpha}_{k}(t)=h^{k}(\widehat{\beta}_{k}(t)-\beta^{(k)}(t)/k!)$. Thus, we have,
\[\sup_{t\in T}|\widehat{\alpha}_{k}(t)|\leq \epsilon_{k}, \quad a.s.\]
and the above proof follows from $\epsilon_{k}=O((\log n/(nh))^{1/2}+h).$ Hence,
\[
\sup_{t\in T}|\widehat{\beta}_{k}(t)-\beta^{(k)}(t)/k!|=O(h^{-k}\{\log n/(nh))^{1/2}+h\}), \quad a.s.
\]
then \eqref{them1} holds.
\end{proof}

\begin{proof}[of Theorem~\ref{theorem2}]

The proof of the asymptotic normality for the coefficient estimator is basically based on the functional central limit theorem of \citet{Pollard:1990}. Similar to the proof of the Theorem 2.1 of \citet{Bill:1997}, we will first show the asymptotic distribution of stochastic functions by the following lemma, which play a crucial role in the proof of Theorem~\ref{theorem2}.

\begin{lemma}
\label{lemB3}
For any nonzero vector $\bm{a}=(a_{1}, \ldots, a_{p})^{\T}$, let
\begin{align}
\label{B33}&u_1(s)=n^{-1}(nh)^{1/2} \sum_{i=1}^{n}\int_{0}^{s}K_{h}(u-t)\bm{a}^{\T}(\bm{u}-\bm{t})dM_{i}(u),\\
\label{B34}&u_2(s)=n^{-1}(nh)^{1/2}\sum_{i=1}^{n}\int_{0}^{s}K_{h}(u-t)\bm{a}^{\T}(\bm{u}-\bm{t})z_{i}dM_{i}(u),
\end{align}
where \[dM_{i}(u)=I(C_{i}\geq u)\{d\widetilde{N}_{i}(u)-\mu_{0}(u)\exp(\beta(u)z_{i})dO_{i}(u)\}.\]
Under Conditions 1-5, we have $\{u_1(s), s\in T\}$ and $\{u_{2}(s),s \in T\}$ converges in distribution to Gaussian processes $ \xi_{1}$ and $\xi_{2},$ respectively, with continuous sample paths, mean 0 and covariance functions identified by 
\begin{align}
\label{B35}&E(\xi_{1}(s_1)\xi_{1}^{'}(s_2))=\int_{0}^{s_{1}\land s_{2}} h K_{h}^{2}(u-t)(\bm{a}^{\T}(\bm{u}-\bm{t}))^{2}E(p_{1}(u \mid z)p_{2}(u \mid z)\sigma(u \mid z))du,\\
\label{B36}&E(\xi_{2}(s_1)\xi_{2}^{'}(s_2))=\int_{0}^{s_{1}\land s_{2}} h K_{h}^{2}(u-t)(\bm{a}^{\T}(\bm{u}-\bm{t}))^{2}E(p_{1}(u \mid z)p_{2}(u \mid z) z^{2} \sigma(u \mid z))du.
\end{align}
\end{lemma}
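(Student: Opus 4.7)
The plan is to establish weak convergence of $\{u_1(s), s\in T\}$ and $\{u_2(s), s\in T\}$ by proving (i) convergence of finite-dimensional distributions via an i.i.d. central limit theorem, and (ii) tightness (stochastic equicontinuity) via an empirical-process maximal inequality following \citet{Pollard:1990}. Since $u_1$ is the special case of $u_2$ with the factor $z_i$ replaced by $1$, I would carry out the argument for $u_2$ and indicate that the same steps apply verbatim to $u_1$.

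For the finite-dimensional step, note that each summand in $u_2(s)$ is
\[
X_i(s) = n^{-1}(nh)^{1/2}\int_0^s K_h(u-t)\bm{a}^{\T}(\bm{u}-\bm{t})z_i\,dM_i(u),
\]
and the $X_i$ are i.i.d. across $i$. The martingale-type increment $dM_i(u) = I(C_i\geq u)\{d\widetilde{N}_i(u) - \mu_0(u)\exp(\beta(u)z_i)dO_i(u)\}$ has conditional mean zero given $(Z_i, O_i, C_i)$ by the mean-function model (\ref{MF2}), so $E\{X_i(s)\}=0$. Then I would compute $\mathrm{Var}\{u_2(s_1\wedge s_2)\}$ and more generally $\mathrm{Cov}\{u_2(s_1), u_2(s_2)\}$ by conditioning on $(Z_i,O_i,C_i)$, using the nonhomogeneous Poisson assumption to replace $\mathrm{Var}\{d\widetilde{N}_i(u)\mid Z_i, O_i\}$ by $\mu_0(u)\exp(\beta(u)z_i)dO_i(u)$ together with the identity $\mu^2(u\mid z)=\sigma(u\mid z)$. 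Taking expectation and rescaling $u=t+hv$ brings out the factor $hK_h^2(u-t)$ and recovers exactly (\ref{B36}). The Lindeberg condition follows routinely from the compact support of $K$, boundedness of $\bm{u}-\bm{t}$ on $\mathrm{supp}\,K_h$, the moment bound on $Z$ in Condition~2, and $nh\to\infty$; thus by the Cram\'er--Wold device, any finite-dimensional vector $(u_2(s_1),\ldots,u_2(s_m))$ converges to a mean-zero Gaussian vector with the stated covariance.

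For tightness, I would appeal to Pollard's functional central limit theorem. The index set $\{g_s(u,z) = K_h(u-t)\bm{a}^{\T}(\bm{u}-\bm{t})z\,I(u\leq s): s\in T\}$ is a product of a fixed (for given $n$) function of $(u,z)$ and the one-parameter family of indicators $I(u\leq s)$. Since $K(\cdot)$ is of bounded variation with compact support by Condition~1 and $I(u\leq s)$ is a monotone family with pseudo-dimension one, arguments parallel to those in Lemma~\ref{lemA1} (via Lemma~A.2 of \citet{Bill:1997} and Lemma~5.1 of \citet{Pollard:1990}) show that the class is Euclidean and hence manageable in the sense of \citet{Pollard:1990}, with envelope controlled by $h^{-1}\|K\|_\infty|z|$. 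The $(nh)^{1/2}n^{-1}$ normalization exactly cancels the $h^{-1}$ in the envelope so that the squared envelope is integrable uniformly in $n$. Invoking Theorem~10.6 of \citet{Pollard:1990} then yields asymptotic equicontinuity of the process, and combined with the finite-dimensional convergence gives weak convergence in $\ell^\infty(T)$ to a Gaussian process $\xi_2$ with mean zero, covariance (\ref{B36}), and continuous sample paths (continuity of paths follows because the limiting covariance is continuous in $(s_1,s_2)$ under Condition~4).

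The main obstacle I anticipate is the tightness step: the scaling $(nh)^{1/2}$ together with the exploding kernel $K_h$ requires careful bookkeeping of the envelope and of the modulus of continuity, and the indicator $I(u\leq s)$ must be handled so that the stochastic equicontinuity holds uniformly in $s\in T$ rather than only on a fine grid. Once the manageability of the relevant function class is established via the bounded-variation decomposition of $K$, the remaining calculations (variance, Lindeberg, continuity of the limit covariance) are routine applications of Conditions~1--5 and will be treated briefly.
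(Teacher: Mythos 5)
Your proposal is correct and follows essentially the same route as the paper: both establish finite-dimensional convergence via the classical central limit theorem (with the covariance computed by conditioning and a change of variables $u=t+hv$) and obtain tightness by verifying manageability of the relevant monotone/bounded-variation function classes through pseudo-dimension arguments and then invoking Pollard's functional central limit theorem, treating $u_1$ as the $z_i\equiv 1$ special case of $u_2$. The only differences are cosmetic (you cite Theorem~10.6 rather than 10.7 of \citet{Pollard:1990}, and you spell out the Lindeberg/Cram\'er--Wold step that the paper leaves implicit).
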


\begin{proof}
\label{P3}
Since $u_{1}$ is a special case of $u_{2}$, when we use 1 substitute for $z_{i}$ in \eqref{B31}, we only need to prove the convergence for $u_{2}$.
In order to get the desired convergence, Theorem 10.7 (the functional central limit theorem) of \citet{Pollard:1990} was invoked. Therefore conditions (i)-(v) need to be verified. 

To verify (i), using the lemma A.1 of \citet{Bill:1997}, it suffices to show both $\{\int_{0}^{s} K_{h}(u-t)\bm{a}^{\T}(\bm{u}-\bm{t})I(C_{i} \geq u)z_{i}d\widetilde{N}_{i}(u), s \in T\}$ and $ \{\int_{0}^{s} K_{h}(u-t)\bm{a}^{\T}(\bm{u}-\bm{t})I(C_{i}\geq u) \mu_{0}(u)\exp(\beta(u)z_{i})dO_{i}(u), s \in T \}$ are manageable. Without loss of generality, we assume $\bm{a}^{\T}(\bm{u}-\bm{t})>0$ and $z_{i}>0$. Thus, for each $i$, $\int_{0}^{s}K_{h}(u-t)\bm{a}^{\T}(\bm{u}-\bm{t})I(C_{i}\geq u)z_{i}d\widetilde{N}_{i}(u)$ is nondecreasing in $s$. Then it has pseudodimension at most 1. By Theorem 4.8 of \citet{Pollard:1990}, therefore it must be Euclidean and manageable. Similarly, $\{ \int_{0}^{s}K_{h}(u-t)\bm{a}^{\T}(\bm{u}-\bm{t})I(C_{i}\geq u)\mu_{0}(u)\exp(\beta(u)z_{i})dO_{i}(u), s\in T\}$ are also Euclidean and manageable. Thus (i) holds.

To verify (ii), under Conditions 1-5 and lemma~{\rm \ref{lemA1}},
\begin{align*}
&\lim_{n \to \infty}E(u_{2}(s_{1})u_{2}(s_{2}))\\
&=\lim_{n \to \infty}n^{-1}h\sum_{i=1}^{n}E((\int_{0}^{s_1}K_{h}(u-t)\bm{a}^{\T}(\bm{u}-\bm{t})z_{i}dM_{i}(u))(\int_{0}^{s_2}K_{h}(u-t)\bm{a}^{\T}(\bm{u}-\bm{t})z_{i}dM_{i}(u)))\\
&=\int_{0}^{s_{1}\wedge s_{2}}hK_{h}^{2}(u-t)(\bm{a}^{\T}(\bm{u}-\bm{t}))^{2}E(p_{1}(u \mid z)p_{2}(u \mid z)z^{2}\sigma(u \mid z))du.
\end{align*}
Thus (ii) holds. By the classical multivariate central limit theorem, we obtain that the convergence of finite-dimensional distributions of $u_{2}$ to those of $\xi_{2}$ is straightforward. The latter issue is tightness.

 For (iii), (iv), under Conditions 2 and 3, envelops can be chosen as $ B^{\ast}/\surd{n}$, for some constant $B^{\ast}$. Thus (iii) and (iv) holds.
 
 To test (v), for any $s_{1}, s_{2} \in T$, define 
 \[
 \rho_{n}(s_{1},s_{2})=E(u_{2}(s_{1})-u_{2}(s_{2}))^2, \quad \rho(s_1,s_2)=E(\xi_{2}(s_2)-\xi_{2}(s_1))^2.
 \]
  Here,
 \begin{align*}
 &\rho_{n}(s_1,s_2)=E(u_2(s_2)-u_2(s_1))^{2}\\
&=n^{-1} \sum_{i=1}^{n}E(h (\int_{s_1}^{s_2}K_{h}(u-t)\bm{a}^{\T}(\bm{u}-\bm{t})z_{i}dM_{i}(u))^{2})\\
&=n^{-1} \sum_{i=1}^{n}E(| \int_{s_1}^{s_2}hK_{h}^{2}(u-t)(\bm{a}^{\T}(\bm{u}-\bm{t}))^{2}z_{i}^{2}I(C_{i}\geq u)\mu_{0}^{2}(u)\exp(2\beta(u)z_{i})o_{i}(u)du|).
\end{align*}
Clearly, ${\rho_{n}}$ is equicontinuous on $T$, and $\lim_{n \to \infty} \rho_{n}(s_1,s_2)=\rho(s_1,s_2), \rho$ is pseudometric on $T$. Thus $\rho_{n}$ converges to $\rho$, uniformly on $T$. Furthermore, we set $\{s_1^{n}\},\{s_2^{n}\}$ be any two sequences in $T$, it follows that if $\rho(s_1^{n},s_2^{n})\to 0$, then $\rho_{n}(s_1^{n},s_2^{n})\to 0$. Thus (v) holds.

Therefore, using Theorem 10.7 (the functional central limit theorem) of \citet{Pollard:1990}, we can state $u_{2}$ converges in distribution to Gaussian process on $T$ having continuous sample path. Hence, $\{u_1(s),s\in T\}$ and $\{u_2(s),s\in T\}$ converges in distribution to Gaussian processes $\xi_1$ and $\xi_2$, respectively.
\end{proof}

Now, we prove the Theorem~\ref{theorem2}.
Let $\gamma_n=(nh)^{-1/2}$, $\bm{\alpha}=\gamma_n^{-1} H(\bm{\beta}-\bm{\beta}^{\ast})$, then
\begin{align*}
&X_n(\gamma_n \bm{\alpha},\tau)=n^{-1} \sum_{i=1}^{n}\int_{0}^{\tau} I(C_i \geq u) K_{h}(u-t)[\gamma_n \bm{\alpha}^{\T} \tilde{z}_i(\bm{u})-\log\{\sum_{i=1}^{n} I(C_i \geq u)\exp(\gamma_n \bm{\alpha}^{\T}\tilde{z}_j(\bm{u})\\
&+\bm{\beta}^{\ast \T}z_j(\bm{u}))o_j(u)/ \sum_{i=1}^{n}I(C_i \geq u) \exp(\bm{\beta}^{\ast \T}z_j(\bm{u}))o_j(u)\}]d\widetilde{N}_i(u).
\end{align*}
Let 
\[
I(C_i \geq u)d\widetilde{N}_i(u)=dM_i(u)+I(C_i \geq u)\mu_0(u)\exp(\beta(u)z_i)dO_i(u),\]
then
\begin{equation}
\label{B37}
X_n(\gamma_n\bm{\alpha},\tau)=A_n(\gamma_n\bm{\alpha},\tau)+U_n(\gamma_n\bm{\alpha},\tau),
\end{equation}
where
\begin{align*}
&A_n(\gamma_n\bm{\alpha},\tau)=n^{-1}\sum_{i=1}^{n}\int_0^{\tau}K_h(u-t)[\gamma_n\bm{\alpha}^{\T}\tilde{z}_i(\bm{u})-\log\{ S_{n,0}(u,\gamma_n\bm{\alpha})/\widetilde{S}_{n,0}(u)\}]I(C_i \geq u)\\
&\mu_0(u)\exp(\beta(u)z_i)o_i(u)du,\\
&U_n(\gamma_n\bm{\alpha},\tau)=n^{-1}\sum_{i=0}^{n}\int_0^{\tau}K_h(u-t)[\gamma_n\bm{\alpha}^{\T}\tilde{z}_i(\bm{u})-\log\{S_{n,0}(u,\gamma_n\bm{\alpha})/\widetilde{S}_{n,0}(u)\}]dM_i(u).
\end{align*}
For 
\[
A_n(\gamma_n\bm{\alpha},\tau)=\int_0^{\tau}K_h(u-t)[\widetilde{S}_{n,1}^{\ast}(u)^{\T}\gamma_n \bm{\alpha}-\log\{S_{n,0}(u,\gamma_n\bm{\alpha})/\widetilde{S}_{n,0}(u)\}\widetilde{S}_{n,0}^{\ast}(u)]\mu_0(u)du,
\]
by Taylor expansion of $S_{n,0}(u,\gamma_n\bm{\alpha})$ at $\bm{\alpha}=0$, it follows that
\begin{align*}
&\log\{S_{n,0}(u,\gamma_n\bm{\alpha})/\widetilde{S}_{n,0}(u)\}\\
&=(\widetilde{S}_{n,1}(u)/\widetilde{S}_{n,0}(u))^{\T}\gamma_n\bm{\alpha}+2^{-1}\gamma_n^{2}\bm{\alpha}^{\T}[\widetilde{S}_{n,2}(u)/\widetilde{S}_{n,0}(u)-(\widetilde{S}_{n,1}(u)/\widetilde{S}_{n,0}(u))^{\otimes 2}]\bm{\alpha}+o_p(\gamma_n^{2})\\
&=(\widetilde{S}_1(u)/\widetilde{S}_0(u))^{\T}\gamma_n\bm{\alpha}+2^{-1}\gamma_n^{2}\bm{\alpha}^{\T}\{\widetilde{S}_2(u)/\widetilde{S}_0(u)-(\widetilde{S}_1(u)/\widetilde{S}_0(u))^{\otimes 2}\}\bm{\alpha}+o_p(\gamma_n^{2}).
\end{align*}
Hence 
\[
A_n(\gamma_n\bm{\alpha},\tau) = \gamma_{n} A_{n,1}(\tau)^{\T}\bm{\alpha}-2^{-1} \gamma_n^{2}\bm{\alpha}^{\T}F_{n,1}(\tau)\bm{\alpha}+o_p(\gamma_n^{2}),
\]
where
\begin{align*}
&A_{n,1}(\tau)=\int_0^{\tau}K_h(u-t)\{\widetilde{S}_1^{\ast}(u)-\widetilde{S}_1(u)\widetilde{S}_0^{\ast}(u)/\widetilde{S}_0(u)\}\mu_0(u)du,\\
&F_{n,1}(\tau)=\int_0^{\tau}K_h(u-t)\{\widetilde{S}_2(u)/\widetilde{S}_0(u)-(\widetilde{S}_1(u)/\widetilde{S}_0(u))^{\otimes 2}\}\widetilde{S}^{\ast}_0(u)\mu_0(u)du.
 \end{align*}
For $\mid u-t\mid< ch$, let $ u=t+hv$, under Conditions 1-5, we have
\begin{align*}
&F_{n,1}(\tau)=\int K(v)\{\widetilde{S}_2(t+hv)/\widetilde{S}_0(t+hv)-(\widetilde{S}_1(t+hv)/\widetilde{S}_0(t+hv))^{\otimes 2}\}\widetilde{S}_0^{\ast}(t+hv)\mu_0(t+hv)dv\\
&=\sigma_1(t)\Omega_{1}+o_{p}(1),
\end{align*}
where $ \Omega_{1}=\int K(v)\bm{v}^{\otimes 2}dv$, and $\bm{v}=(1,v,\ldots,v^{p})^{\T}$.\\
Thus 
\begin{equation}
\label{B38}
A_n(\gamma_n \bm{\alpha},\tau)=\gamma_n A_{n,1}(\tau)^{\T}\bm{\alpha}- 2^{-1} \gamma_n^{2}\bm{\alpha}^{\T}\sigma_1(t) \Omega_{1} \bm{\alpha}+o_p(\gamma_n^{2}).
\end{equation}
Similarly, we have
\[
U_{n}(\gamma_{n}\bm{\alpha},\tau)=\gamma_{n}\bm{\alpha}^{\T}U_{n,1}(\tau)-2^{-1} \gamma_{n}^{2}\bm{\alpha}^{\T}F_{n,2}(\tau)\bm{\alpha}+o_{p}(\gamma_{n}^{2}),
\]
where
\begin{align*}
&U_{n,1}(\tau)=\int_0^{\tau}K_h(u-t)n^{-1}\sum_{i=1}^{n}\{\tilde{z}_i(\bm{u})-\widetilde{S}_{n,1}(u)/\widetilde{S}_{n,0}(u)\}dM_i(u),\\
&F_{n,2}(\tau)=n^{-1}\sum_{i=1}^{n}\int_0^{\tau}K_h(u-t)\{\widetilde{S}_{n,2}(u)/\widetilde{S}_{n,0}(u)-(\widetilde{S}_{n,1}(u)/\widetilde{S}_{n,0}(u))^{\otimes 2}\}dM_i(u).
\end{align*}
For $F_{n,2}(\tau)$, similar to Lemma~\ref{lemB3}, we have $\{ \int_0^{s}K_h(u-t)dM_i(u), s\in T\}$ is manageable. Let constant $\bar{B}/\surd{n}$ as envelope. Thus, using Theorem 8.3 (the uniform law of large numbers) of \citet{Pollard:1990}, we can derive
\[
\lim_{n\to \infty} \sup_{s\in T} \|n^{-1} \sum_{i=1}^{n}\int_0^{s}K_h(u-t)dM_i(u)-0\|=0.
\]
Also, by Lemma~\ref{lemA1}, as $n\to \infty$,
\[
\sup_{s\in T}\| \{\widetilde{S}_{n,2}(u)/\widetilde{S}_{n,0}(u)-(\widetilde{S}_{n,1}(u)/\widetilde{S}_{n,0}(u))^{\otimes 2}\}-\{\widetilde{S}_2(u)/\widetilde{S}_0(u)-(\widetilde{S}_1(u)/\widetilde{S}_0(u))^{\otimes 2}\}\| \rightarrow 0.
\]
Then, by lemma~{\rm \ref{lemA2}}, we have 
\[
F_{n,2}(\tau)=O_p(\gamma_n),
\]
Therefore, 
\begin{equation}
\label{B39}
U_n(\gamma_n\bm{\alpha},\tau)=\gamma_n\bm{\alpha}^{\T}U_{n,1}(\tau)+O_p(\gamma_n^{2}).
\end{equation}
From \eqref{B37}, \eqref{B38} and \eqref{B39}, we obtain
\[
X_{n}(\gamma_n\bm{\alpha},\tau)=\{A_{n,1}(\tau)+U_{n,1}(\tau)\}^{\T}\gamma_n\bm{\alpha}-2^{-1} \gamma_n^{2}\bm{\alpha}^{\T}\sigma_1(t)\Omega_{1} \bm{\alpha}+o_p(\gamma_n^{2}).
\]
Using Quadratic Approximation Lemma of \citet{FanGij:1996}, we derive
\begin{equation}
\label{B40}
\bm{\widehat{\alpha}}=\gamma_n^{-1}(\sigma_1(t)\Omega_{1})^{-1}\{A_{n,1}(\tau)+U_{n,1}(\tau)\}+o_p(1).
\end{equation}
For
$A_{n,1}(\tau)=\int_0^{\tau}K_h(u-t)\{\widetilde{S}_1^{\ast}(u)-\widetilde{S}_1(u)\widetilde{S}_0^{\ast}(u)/\widetilde{S}_0(u)\}\mu_0(u)du$. We apply Taylor expansion to the term:
\[
 \widetilde{S}_1^{\ast}(u)-\widetilde{S}_1(u)\widetilde{S}_0^{\ast}(u)/\widetilde{S}_0(u)=\widetilde{S}_1^{\ast}(u)-\widetilde{S}_1(u)-\widetilde{S}_1(u)(\widetilde{S}_0^{\ast}(u)-\widetilde{S}_0(u))/\widetilde{S}_0(u).
 \]
Note that 
\begin{align*}
\beta(u)z & \approx \beta(t)z+\beta'(t)z(u-t)+\cdots+\beta^{(p)}(t)z(u-t)^{p}/p!+\beta^{(p+1)}(t)z(u-t)^{p+1}/(p+1)!\\
&=\bm{\beta}^{\ast \T}z(\bm{u})+\beta^{(p+1)}(t)z(u-t)^{p+1}/(p+1)!.
\end{align*}
Then
\begin{align*}
\exp(\beta(u)z)-\exp(\bm{\beta}^{\ast \T}z(\bm{u}))& \approx \exp(\beta(u)z)\{1-\exp(-\beta^{(p+1)}(t)z(u-t)^{p+1}/(p+1)!)\}\\
& \approx \exp(\beta(u)z)\beta^{(p+1)}(t)z(u-t)^{p+1}/(p+1)!.
\end{align*}
Thus
\begin{align*}
&\widetilde{S}_1^{\ast}(u)-\widetilde{S}_1(u)= E(p_1(u \mid z)p_2(u \mid z)\exp(\beta(u)z)z\tilde{z}(\bm{u}))\beta^{(p+1)}(t)(u-t)^{p+1}/(p+1)!\\
&\qquad\qquad \qquad\quad+o((u-t)^{p+1}),\\
&\widetilde{S}_0^{\ast}(u)-\widetilde{S}_0(u)=E(p_1(u \mid z)p_2(u \mid z)\exp(\beta(u)z)z)\beta^{(p+1)}(t)(u-t)^{p+1}/(p+1)!+o((u-t)^{p+1}),\\
&\widetilde{S}_0(u)=E(p_1(u \mid z)p_2(u \mid z)\exp(\beta(u)z))+O((u-t)^{p+1}),\\
&\widetilde{S}_1(u)=E(p_1(u \mid z)p_2(u \mid z)\exp(\beta(u)z)\tilde{z}(\bm{u}))+O((u-t)^{p+1}).
\end{align*}
Therefore, we have
\begin{align*}
&A_{n,1}(\tau)=\int_{0}^{\tau} K_{h}(u-t)[\{E(p_1(t \mid z)p_2(t \mid z)\exp(\beta(u)z)z\tilde{z}(\bm{u}))-\widetilde{S}_1^{\ast}(u)S_1^{\ast}(u,\beta(u))/S_0^{\ast}(u,\beta(u))\}\\
&\beta^{(p+1)}(t)(u-t)^{p+1}/(p+1)!+o((u-t)^{p+1})]du.
\end{align*}
Let $u = t + h v$, we derive
\begin{equation}
\label{B41}
A_{n,1}(\tau)=\int K(v)\bm{v}v^{p+1}dv\sigma_{1}(t)h^{p+1}\beta^{(p+1)}(t)/(p+1)!+o(h^{p+1}).
\end{equation}
From \eqref{B40} and \eqref{B41}, we obtain (let $\bm{b}=\int K(v)v^{p+1}\bm{v}dv$)
\[
\bm{\widehat{\alpha}}=\gamma_n^{-1}\Omega_{1}^{-1}\bm{b}h^{p+1}\beta^{(p+1)}(t)/(p+1)!+\gamma_n^{-1}\sigma_{1}^{-1}(t)\Omega_{1}^{-1}U_{n,1}(\tau)+o_p(1).\]
Hence,
\begin{equation}
\label{B42}
(nh)^{1/2}\{H(\bm{\widehat{\beta}}-\bm{\beta}^{\ast})-\Omega_{1}^{-1}\bm{b}h^{p+1}\beta^{(p+1)}(t)/(p+1)!\}=\gamma_n^{-1}\sigma_{1}^{-1}(t)\Omega_{1}^{-1}U_{n,1}(\tau)+o_p(1).
\end{equation}
Therefore, \eqref{B40} can be reduced to prove the multivariate normality of $(nh)^{1/2}U_{n,1}(\tau)$. That is equivalent to prove the normality of $\bm{a}^{\T}(nh)^{1/2}U_{n,1}(\tau)$, for any nonzero vector $\bm{a}=(a_{1},\ldots, a_{p})^{\T}$. Write $\widetilde{U}_{n}(s)=\bm{a}^{\T}(nh)^{1/2}U_{n,1}(s)$ is empirical process, we will show that it converges to Gaussian process $\tilde{\xi}$.\\
In fact,
\[
\widetilde{U}_{n}(s)=\widetilde{U}_{n1}(s)+\widetilde{U}_{n2}(s),
\]
where
\begin{align*}
&\widetilde{U}_{n1}(s)=n^{-1}(nh)^{1/2}\sum_{i=1}^{n}\int_0^{s}K_h(u-t)\bm{a}^{\T}(\bm{u}-\bm{t})\{z_i-S_{1}(u,\bm{\beta}^{\ast})/S_{0}(u,\bm{\beta}^{\ast})\}dM_i(u),\\
&\widetilde{U}_{n2}(s)=n^{-1}(nh)^{1/2}\sum_{i=1}^{n}\int_0^{s}K_h(u-t)\bm{a}^{\T}(\bm{u}-\bm{t})\{S_{1}(u,\bm{\beta}^{\ast})/S_{0}(u,\bm{\beta}^{\ast})-S_{n,1}(u,\bm{\beta}^{\ast})/S_{n,0}(u,\bm{\beta}^{\ast})\}\\
&\qquad \qquad dM_i(u).
\end{align*}
For $\widetilde{U}_{n2}(s)$, by Lemma~\ref{lemB3} and the Strong Representation Theorem of \citet{Pollard:1990}, we  can construct a new probability space, and have
\[ 
\sup_{s \in T} \| u_1(s)-\xi_1(s) \| \to 0, \quad as \quad n \to \infty,
\]
and by Lemma~\ref{lemA1}, we have 
\[
\sup_{s\in T}\| S_1(u,\bm{\beta}^{\ast})/S_0(u,\bm{\beta}^{\ast})-S_{n,1}(u,\bm{\beta}^{\ast})/S_{n,0}(u,\bm{\beta}^{\ast}) \|\to 0, \quad as \quad n \to \infty.
\]
Then by Lemma~\ref{lemA2}, we can show that almost surely,
\begin{align*}
&n^{-1}(nh)^{1/2} \sum_{i=1}^{n}\int_0^{s}K_h(u-t)\bm{a}^{\T}(\bm{u}-\bm{t})\{S_{1}(u,\bm{\beta}^{\ast})/S_{0}(u,\bm{\beta}^{\ast})-S_{n,1}(u,\bm{\beta}^{\ast})/S_{n,0}(u,\bm{\beta}^{\ast})\}dM_i(u)\\
&\to 0, \quad as \quad n \to \infty.
\end{align*}
which holds in original probability space since the statement is now in probability. Thus the convergence of $\widetilde{U}_{n}(s)$ reduces to that of $\widetilde{U}_{n1}(s)$. Here,
\begin{align*}
&\lim_{n \to \infty}E(\widetilde{U}_{n1}(s_{1})\widetilde{U}_{n1}(s_{2}))\\
&=\lim_{n\to\infty}n^{-1} \sum_{i=1}^{n}E(h(\int_0^{s_1}K_h(u-t)\bm{a}^{\T}(\bm{u}-\bm{t})\{z_i-S_1(u,\bm{\beta}^{\ast})/S_0(u,\bm{\beta}^{\ast})\}dM_i(u))(\int_0^{s_2}K_h(u-t)\\
&\quad \bm{a}^{\T}(\bm{u}-\bm{t})\{z_i-S_1(u,\bm{\beta}^{\ast})/S_0(u,\bm{\beta}^{\ast})\}dM_i(u)))\\
&=\int_0^{s_1 \land s_2} h K_h^{2}(u-t)(\bm{a}^{\T}(\bm{u}-\bm{t}))^{2}E(p_1(u \mid z)p_2(u \mid z)\{z-S_{1}(u,\bm{\beta}^{\ast})/S_{0}(u,\bm{\beta}^{\ast})\}^{2}\mu_{0}^{2}(u)\exp(2\\
& \quad \beta(u)z))du\\
&=E(\tilde{\xi}(s_1)\tilde{\xi}(s_2)).
\end{align*}
Then, the convergence of finite-dimensional distributions of $\widetilde{U}_{n1}(s)$ to those of $\tilde{\xi}$ is clearly true by the classical multivariate central limit theorem, since $\widetilde{U}_{n1}$ is a sum of independent random variables. It remains to show tightness for $\widetilde{U}_{n1}$, or equivalently, tightness for 
\[
\widetilde{U}_{n1}(s)=n^{-1}(nh)^{1/2}\sum_{i=1}^{n}\int_0^{s}K_h(u-t)\bm{a}^{\T}(\bm{u}-\bm{t})\{z_i-S_{1}(u,\bm{\beta}^{\ast})/S_{0}(u,\bm{\beta}^{\ast})\}dM_i(u).
\]
By Lemma~\ref{lemB3}, $\{n^{-1}(nh)^{1/2}\sum_{i=1}^{n}\int_0^{s}K_h(u-t)\bm{a}^{\T}(\bm{u}-\bm{t}) z_idM_i(u), s\in T\}$ is tightness. And analogous to the proof of Lemma~\ref{lemB3}, we can check that $\{n^{-1}(nh)^{1/2}\sum_{i=1}^{n}\int_0^{s}K_h(u-t)\bm{a}^{\T}(\bm{u}-\bm{t})S_1(u,\bm{\beta}^{\ast})/S_0(u,\bm{\beta}^{\ast})dM_i(u), s\in T\}$ is tightness, too. Therefore, $\widetilde{U}_{n1}(s)$ converges to $\tilde{\xi}$. Hence, $\bm{a}^{\T}(nh)^{1/2}{U}_{n,1}(\tau)$ is normal. Then, $(nh)^{1/2}{U}_{n,1}(\tau)$ is multivariate normal, and asymptotically covariance is as follows:
\[
\Sigma_2(t)=\int K^{2}(v)\bm{v}^{\otimes 2}dvE(p_{1}(t \mid z)p_{2}(t \mid z)\mu_{0}^{2}(t)\exp(2\beta(t)z)(z-q_{1}(t)/q_{0}(t))^{2})=\sigma_{2}(t)\Omega_{2},
\]
where $\Omega_{2}=\int K^{2}(v)\bm{v}^{\otimes 2}dv$.\\
Therefore,
\[
 (nh)^{1/2}\{H(\bm{\widehat{\beta}}-\bm{\beta}^{\ast})-\Omega_{1}^{-1}\bm{b}h^{p+1}\beta^{(p+1)}(t)/(p+1)!\}\to N(0,\sigma_1^{-2}(t)\sigma_2(t)\Omega_{1}^{-1}\Omega_{2}\Omega_{1}^{-1}), 
 \]
 as $n\to\infty$, $h\to0$, $nh\to \infty$.
\end{proof}

\begin{proof}[of consistency of covariance]

For $\widehat{\Sigma}(t)=\widehat{\Sigma}_{1}^{-1}(t)\widehat{\Sigma}_{2}(t)\widehat{\Sigma}_{1}^{-1}(t),$ where $\widehat{\Sigma}_{1}(t)$ and $\widehat{\Sigma}_{2}(t)$ defined as \eqref{Sig1} and \eqref{Sig2}, respectively. Next up, we will show that $\widehat{\Sigma}_{1}(t)$ and $\widehat{\Sigma}_{2}(t)$ are consistent, respectively.

First of all, we give a conclusion by the following demonstration. Under Conditions 2-4, there exists a neighborhood $\mathcal{B}$ of $\bm{\beta}^{\ast}$, such that functions $S_j(u,\bm{\beta}), j=0,1, 2$ are continuous in $\bm{\beta} \in \mathcal{B}$, uniformly in $u \in T$. And $S_0(u,\bm{\beta})$ is bounded away of from zero on $(u,\bm{\beta}) \in T \times \mathcal{B}$. Furthermore, by Lemma~\ref{lemA1}, we can derive, for  each $j= 0,1, 2$,
\begin{equation}
\label{SU}
\sup_{\mathcal{B}\times T } \|S_{n,j}(u,\bm{\beta})-S_j(u,\bm{\beta}) \|\to 0,\quad as \quad n \to \infty.
 \end{equation}
 
 Further, account for $\widehat{\Sigma}_{1}(t)$. We will prove $\widehat{\Sigma}_{1}(t)$ converges to $\Sigma_{1}(t)=\sigma_{1}(t)\Omega_{1}$. Let 
 \begin{equation}
 \label{v1}
 v_{1}(u,\beta(u))=S_2^{\ast}(u,\beta(u))/S_0^{\ast}(u,\beta(u))-S_1^{\ast 2}(u,\beta(u))/S_0^{\ast 2}(u,\beta(u)),
 \end{equation}
  and from the defined $q_{j}(t)$, we have $S_j^{\ast}(t,\beta(t))=q_j(t)/\mu_0(t)$, $j=0,1,2$. \\
 Then, we obtain
 \begin{align}
 \label{sig1}
  \Sigma_1(t) & =\{q_{2}(t)-q_1^{2}(t)/q_0(t)\}\Omega_{1}=\mu_0(t)\{S_2^{\ast}(t,\beta(t))-S_1^{\ast 2}(t,\beta(t))/S_0^{\ast}(t,\beta(t))\}\Omega_{1}\nonumber\\
   & =\int K_h(u-t)(\bm{u}-\bm{t})^{\otimes 2}\mu_0(u)S_0^{\ast}(u,\beta(u))v_{1}(u,\beta(u))du+o(1). 
  \end{align}
Using triangle inequality, we have 
\begin{align*}
&\|\widehat{\Sigma}_1(t)-\Sigma_1(t)\|\\
&\leq \| n^{-1}\sum_{i=1}^{n}\int_0^{\tau}I(C_i\geq u)K_h(u-t)(\bm{u}-\bm{t})^{\otimes 2}(V_{1}(u,\bm{\widehat{\beta}})-v_{1}(u,\beta(u)))d\widetilde{N}_i(u)\|\\
&+\| n^{-1}\sum_{i=1}^{n}\int_0^{\tau}I(C_i\geq u)K_h(u-t)(\bm{u}-\bm{t})^{\otimes 2}v_{1}(u,\beta(u))\{d\widetilde{N}_i(u)-\mu_0(u)\exp(\beta(u)z_i)o_i(u)du\}\|\\
&+\|n^{-1}\sum_{i=1}^{n}\int_0^{\tau}I(C_i\geq u)K_h(u-t)(\bm{u}-\bm{t})^{\otimes 2}v_{1}(u,\beta(u))\mu_0(u)\exp(\beta(u)z_i)o_i(u)du-\int_0^{\tau} K_h(u\\
&-t)(\bm{u}-\bm{t})^{\otimes 2}\mu_0(u)S_0^{\ast}(u,\beta(u))v_{1}(u,\beta(u))du\|\\
&+\|\int_0^{\tau} K_h(u-t)(\bm{u}-\bm{t})^{\otimes 2}\mu_0(u)S_0^{\ast}(u,\beta(u))v_{1}(u,\beta(u))du- \mu_0(t)\{S_2^{\ast}(t,\beta(t))-S_1^{\ast 2}(t,\beta(t))/\\
&S_0^{\ast}(t,\beta(t))\}\Omega_{1}\|.
\end{align*}
where $V_{1}(u,\bm{\widehat{\beta}})$ defined in \eqref{V1}.

 For the first term of the right-hand side, under Conditions 1-5, by the consequence of Theorem~\ref{theorem1}, from \eqref{a1}, we can derive
 \begin{equation}
 \label{SS}
  \sup_{\mathcal{B}\times T} \|S_{j}(u,\bm{\widehat{\beta}})-S_{j}^{\ast}(u,\beta(u))\|\to 0, \quad as \quad n\to \infty.
 \end{equation}
 Hence, from \eqref{SU} and \eqref{SS}, we have
  \begin{equation}
  \label{Vv1}
  \sup_{\mathcal{B}\times T}\| V_{1}(u,\bm{\widehat{\beta}})-v_{1}(u,\beta(u))\| \to 0,\quad as \quad n\to\infty.
  \end{equation}
  By consequence of Lenglart inequality,
 \begin{align}
 \label{Len}
& pr(\{n^{-1}\sum_{i=1}^{n}\int_0^{\tau}I(C_i\geq u)K_h(u-t)d\widetilde{N}_i(u)>C\})\nonumber\\
& \leq \frac{\delta}{C}+pr(\{ \int_0^{\tau}n^{-1}\sum_{i=1}^{n}I(C_i\geq u)K_h(u-t)\mu_0(u)\exp(\beta(u)z_i)o_i(u)du>\delta \}),
 \end{align}
when $\delta>\int_0^{\tau}K_h(u-t)\mu_0(u)S_0^{\ast}(u,\beta(u))du=\mu_0(t)S_0^{\ast}(t,\beta(t))$, the latter probability tends to zero as $n\to\infty, h\to 0$, and $ nh \to \infty$. Thus the first term converges to zero.

For the second term of the right-hand side, that is $n^{-1}\sum_{i=1}^{n}\int_0^{\tau}K_h(u-t)v_{1}(u,\beta(u))dM_i(u)$ is empirical process. by Lemma~\ref{lemB3} and $v_{1}(u,\beta(u))$ is non-negative function, analogous to the proof of Theorem~\ref{theorem2}, using the Theorem 8.3 (the uniform law of large numbers) of \citet{Pollard:1990}, we can demonstrate the second term converges to zero.

For the third term of the right-hand side, under Conditions 1-4, functions $v_{1}(u,\beta(u))$ are bounded. So from \eqref{SU}, it is easy to prove the third term tend to zero.

For the fourth term of the right-hand side, from \eqref{sig1}, obviously, it converges to zero.\\
Therefore, 
\begin{equation}
\label{SS1}
\|\widehat{\Sigma}_1(t)-\Sigma_1(t)\|\to 0,\quad as \quad n\to\infty.
\end{equation}

Next, we will prove $\widehat{\Sigma}_{2}(t)$ converges to $\Sigma_{2}(t)=\sigma_{2}(t)\Omega_{2}$ by the following demonstration. Let
\begin{equation}
\label{v2}
v_{2}(u,\beta(u))=\{z-S_{1}^{\ast}(u,\beta(u))/S^{\ast}_{0}(u,\beta(u))\}^{2}.
\end{equation}
For
\begin{align}
\label{sig2}
\Sigma_2(t)&=E(p_{1}(t \mid z)p_{2}(t \mid z)\mu_{0}^{2}(t)\exp(2\beta(t)z)(z-q_{1}(t)/q_{0}(t))^{2})\Omega_{2}\nonumber\\
 &=\int_0^{\tau}h K_h^{2}(u-t)(\bm{u}-\bm{t})^{\otimes 2}\mu_{0}^{2}(u)E(p_{1}(u \mid z)p_{2}(u \mid z)\exp(2\beta(u)z)v_{2}(u,\beta(u))du+o(1). 
\end{align}
Then, using triangle inequality, we have
\begin{align*}
 & \| \widehat{\Sigma}_2(t)-\Sigma_2(t)\| \\
 &\leq \|n^{-1}\sum_{i=1}^{n} \int_0^{\tau} hK_h^{2}(u-t)(\bm{u}-\bm{t})^{\otimes 2}I(C_{i}\geq u)V_{2}(u,\bm{\widehat{\beta}}) \widehat{\mu}^{2}_0(u,\widehat{\beta}(u))\exp(2\widehat{\beta}(t)z_i)o_{i}(u)du-\int_0^{\tau}\\
 &hK_h^{2}(u-t)(\bm{u}-\bm{t})^{\otimes 2}\widehat{\mu}^{2}_0(u,\widehat{\beta}(u))E(p_{1}(u\mid z)p_{2}(u\mid z)v_{2}(u,\beta(u))\exp(2\beta(t)z))du\|\\
 &+\|\int_0^{\tau} hK_h^{2}(u-t)(\bm{u}-\bm{t})^{\otimes 2}\{\widehat{\mu}^{2}_0(u,\widehat{\beta}(u))-\mu^{2}_{0}(u)\}E(p_{1}(u\mid z)p_{2}(u\mid z)v_{2}(u,\beta(u))\exp(2\beta(t)z))\\
 &du\|\\
 &+\|\int_0^{\tau} hK_h^{2}(u-t)(\bm{u}-\bm{t})^{\otimes 2}\mu^{2}_{0}(u)E(p_{1}(u\mid z)p_{2}(u\mid z)v_{2}(u,\beta(u))\exp(2\beta(t)z))du-E(p_{1}(t \mid z)\\
 &p_{2}(t \mid z)\mu_{0}^{2}(t)\exp(2\beta(t)z)(z-q_{1}(t)/q_{0}(t))^{2})\Omega_{2}\|.
\end{align*}
where $V_{2}(u,\bm{\widehat{\beta}})$ defined in \eqref{V2}.

For the first term of the right-hand side, let $V_{21}(u,\bm{\widehat{\beta}})=S_{n,1}(u,\bm{\widehat{\beta}})/S_{n,0}(u,\bm{\widehat{\beta}}),$ and $ v_{21}=S_{1}^{\ast}(u,\beta(u))/S^{\ast}_{0}(u,\beta(u)).$ We have
\begin{align}
\label{Sig11}
&n^{-1}\sum_{i=1}^{n} \int_0^{\tau} hK_h^{2}(u-t)(\bm{u}-\bm{t})^{\otimes 2}I(C_{i}\geq u)V_{2}(u,\bm{\widehat{\beta}}) \widehat{\mu}^{2}_0(u,\widehat{\beta}(u))\exp(2\widehat{\beta}(t)z_i)o_{i}(u)du\nonumber\\
&=\int_0^{\tau} hK_h^{2}(u-t)(\bm{u}-\bm{t})^{\otimes 2}\widehat{\mu}^{2}_0(u,\widehat{\beta}(u))\{S_{n,2}^{\ast}(u,2\widehat{\beta}(t))-2S_{n,1}^{\ast}(u,2\widehat{\beta}(t))V_{21}(u,\bm{\widehat{\beta}})+S_{n,0}^{\ast}(u,2\widehat{\beta}(t))\nonumber\\
&V_{21}^{2}(u,\bm{\widehat{\beta}})\}du.
\end{align}
From \eqref{them1.1} and \eqref{SU}, we can derive, for each $j= 0,1, 2$,
\begin{equation}
\label{SU1}
\sup_{\mathcal{B}\times T } \|S_{n,j}(u,2\widehat{\beta}(t))-S_j(u,2\beta(t)) \|\to 0,\quad as \quad n \to \infty.
\end{equation}
Analogous to the proof of $V_{1}(u,\bm{\widehat{\beta}})$, we can obtain
\begin{equation}
 \label{Vv21}
 \sup_{\mathcal{B}\times T}\| V_{21}(u,\bm{\widehat{\beta}})-v_{21}(u,\beta(u))\| \to 0,\quad as \quad n\to\infty,
 \end{equation}
 and 
 \begin{equation}
 \label{Vv22}
 \sup_{\mathcal{B}\times T}\| V_{21}^{2}(u,\bm{\widehat{\beta}})-v_{21}^{2}(u,\beta(u))\| \to 0,\quad as \quad n\to\infty.
 \end{equation}
Hence, from \eqref{Sig11}, \eqref{SU1}, \eqref{Vv21} and \eqref{Vv22}, the convergence of the first term can be demonstrated. 

For the second term of the right-hand side, let
\begin{equation}
\label{mu1}
\widehat{\mu}^{2}_0(u,\widehat{\beta}(u))-\mu^{2}_{0}(u)=(\widehat{\mu}_0(u,\widehat{\beta}(u))+\mu_{0}(u))(\widehat{\mu}_0(u,\widehat{\beta}(u))-\mu_{0}(u)),
\end{equation}
and
\begin{align}
\label{mu2}
\widehat{\mu}_0(u,\widehat{\beta}(u))-\mu_{0}(u)&=\{\widehat{\mu}_0(u,\widehat{\beta}(u))-\widehat{\mu}_0(u,\beta(u))\}+\{\widehat{\mu}_0(u,\beta(u))-\mu_{0}(u)\}\nonumber\\
&=H_{n}(u,\beta^{\ast})(\widehat{\beta}(u)-\beta(u))+n^{-1}\sum_{i=1}^{n}dM_{i}(u)/S^{\ast}_{n,0}(u,\beta(u)),
\end{align} 
where $H_{n}(u,\beta^{\ast})=-S_{n,1}^{\ast}(u,\beta^{\ast})\sum_{i=1}^{n}I(C_{i}\geq u)N_{i}(u)o_{i}(u)/nS_{n,0}^{\ast 2}(u,\beta^{\ast})$, and $\beta^{\ast}$ is between $\beta(u)$ and $\widehat{\beta}(u)$.\\
Under Conditions 1-5, by Lemma~\ref{lemA1} and \eqref{them1.1}, $H_{n}(u,\beta^{\ast})$ is bounded, and $\widehat{\beta}(u)$ uniform converges to $\beta(u)$. Hence, we can derive
\begin{equation}
\label{BU}
\sup_{u\in T}\| H_{n}(u,\beta^{\ast})(\widehat{\beta}(u)-\beta(u))\|\to 0,\quad as \quad n\to\infty.
\end{equation}
 For $n^{-1}\sum_{i=1}^{n}dM_{i}(u)/S^{\ast}_{n,0}(u,\beta(u))$ is empirical process, analogous to the proof of Theorem~\ref{theorem2}, using Theorem 8.3 (the uniform law of large numbers) of \citet{Pollard:1990}, we can obtain
 \begin{equation}
 \label{EU}
 \sup_{u\in T}\|n^{-1}\sum_{i=1}^{n}dM_{i}(u)/S^{\ast}_{n,0}(u,\beta(u))\| \to 0,\quad as \quad n\to\infty.
 \end{equation}
Under Conditions 1-5, $\widehat{\mu}_0(u,\widehat{\beta}(u))+\mu_{0}(u) $ is bounded, in conjunction with \eqref{mu1}, \eqref{mu2}, \eqref{BU} and \eqref{EU}, we obtain
\begin{equation}
\label{mu3}
\sup_{u\in T} \| \widehat{\mu}^{2}_0(u,\widehat{\beta}(u))-\mu^{2}_{0}(u)\| \to 0,\quad as \quad n\to\infty.
\end{equation}
Therefore, the second term converges to zero.

For the third term of the right-hand side, from \eqref{sig2}, obviously, converges to zero.\\
Hence,
\begin{equation}
\label{SS2}
\| \widehat{\Sigma}_2(t)-\Sigma_2(t)\|\to 0 ,\quad as \quad n\to\infty.
\end{equation}
Therefore, from \eqref{SS1} and \eqref{SS2}, we have $\widehat{\Sigma}(t)$ is consistent.
\end{proof}

\begin{proof}[of the asymptotic normality of $\widehat{\mu}_{0}(t,\widehat{\beta}(t))$]
Let
\begin{align}
\label{Mu1}
(nh)^{1/2}(\widehat{\mu}_0(t,\widehat{\beta}(t))-\mu_0(t)) & =(nh)^{1/2}(\widehat{\mu}_0(t,\widehat{\beta}(t))-\widehat{\mu}_0(t,\beta(t)))+(nh)^{1/2}(\widehat{\mu}_0(t,\beta(t))-\mu_0(t))\nonumber\\
& =H_{n}(t,\beta^{\ast})(nh)^{1/2}(\widehat{\beta}(t)-\beta(t))+(nh)^{1/2}\sum_{i=1}^{n}dM_i(t)/nS_{n,0}^{\ast}(t,\beta(t)). 
\end{align}
where
$H_{n}(t,\beta^{\ast})=-S_{n,1}^{\ast}(t,\beta^{\ast})\sum_{i=1}^{n}I(C_i\geq t)N_i(t)o_i(t)/nS_{n,0}^{\ast 2}(t,\beta^{\ast})$, and $\beta^{\ast}$ is between $\widehat{\beta}(t)$ and $\beta(t)$.

For the second term of right-hand side of \eqref{Mu1}, let
\begin{align}
\label{SM3}
&(nh)^{1/2}\sum_{i=1}^{n}dM_i(t)/nS_{n,0}^{\ast}(t,\beta(t))\nonumber\\
&=(nh)^{1/2}\sum_{i=1}^{n}dM_i(t)/nS_{0}^{\ast}(t,\beta(t))+n^{-1}(nh)^{1/2}\sum_{i=1}^{n}\{S_{n,0}^{\ast -1}(t,\beta(t))-S_{0}^{\ast -1}(t,\beta(t))\}dM_i(t).
\end{align}
Define 
\begin{equation}
\label{U3}
U_3(s)=n^{-1}(nh)^{1/2}\sum_{i=1}^{n}dM_i(s),
\end{equation}
Clearly, $E[U_3(s)]=0$.

Analogous to the proof of Lemma~\ref{lemB3}, using the functional central limit theorem of \citet{Pollard:1990}, we shall argument $U_3$ converges to Gaussian process $\xi_3$. Now we test the conditions (i)-(v) in \citet{Pollard:1990}.\\
 Under Conditions 1-5, by Lemma~\ref{lemA1}, we have, for any $s_1,s_2\in T$, 
\begin{align*}
&\lim_{n\to\infty}E(U_3(s_1)U_3(s_2))\\
&=\lim_{n\to\infty}E( n^{-1}h\sum_{i=1}^{n}(dM_i(s_1)dM_i(s_2)))\\
&=hE(\{I(C\geq s_1)\mu_0(s_1)\exp(\beta(s_1)z)o(s_1)\} \{I(C\geq s_2)\mu_0(s_2)\exp(\beta(s_2)z)o(s_2)\})+o_{p}(1)\\
&=\left \{ \begin{array}{ll} hE(p_{1}(s\mid z)p_{2}(s\mid z)\mu_0^{2}(s)\exp(2\beta(s)z))+o_{p}(1) & \qquad  s_1=s_2=s \\ 0 & \qquad s_1 \neq s_2 \end{array}\right.
\end{align*}
Then, by the classical multivariate central limit theorem for independent random vectors, the finite-dimensional distributions of $U_3$ converge to those of gaussian process $\xi_3$, which converge to zero, as $h$ gets to zero. Thus condition (ii) holds. Next, checking the tightness. Under Conditions 1-5, we know $\{ I(C_i\geq t)N_i(t)o_i(t), t \in T\}$ has finite points, and $\exp(\beta(t)z_i)o_i(t)$ are bounded variation functions. Thus, $\{ dM_i(t), t \in T \}$ is manageable, and the envelops can be chosen as constant $\bar{B}/\surd{n}$, then (i)(iii)(iv) holds.To verify (v),  for any $s_1,s_2 \in T$, define 
\[
\rho_n(s_1,s_2)=E(U_3(s_1)-U_3(s_2))^{2}, \quad \rho(s_1,s_2)=E(\xi_3(s_1)-\xi_3(s_2))^{2}.
\]
Further,
\begin{align*}
&\rho_n(s_1,s_2) = E(n^{-1}\sqrt{nh}\sum_{i=1}^{n}(dM_i(s_1)-dM_i(s_2)))^{2}\\
&=n^{-1}h\sum_{i=1}^{n}E( I(C_i\geq s_1)\mu^{2}_0(s_1)\exp(2\beta(s_1)z_i)o_i(s_1)+I(C_i\geq s_2)\mu^{2}_0(s_2)\exp(2\beta(s_2)z_i)o_i(s_2)).
\end{align*}
Clearly, $\{\rho_n\}$ is equicontinuous on $T$, and $\lim_{n\to\infty}\rho_n(s_1,s_2)=\rho(s_1,s_2),\rho$ is pseudometric on $T$.
Thus $\rho_n$ converges, uniformly on $T$, to $\rho$. And, let $\{s_1^{(n)}\},\{s_2^{(n)}\}$ be any two sequence in $T$, it follows that if $\rho(s_1^{(n)},s_2^{(n)})\to 0$, then $\rho_n(s_1^{(n)},s_2^{(n)})\to 0$, then (v) holds. Therefore, $U_3$ converges in distribution to Gaussian process on $T$, and covariance matrix is diagonal matrix, and the matrix is zero, when $h$ gets to zero. That is, $U_3$ converges in distribution zero, as $n\to \infty$, $h\to 0$ and $nh\to \infty$. Moreover, using the Strong Representation Theorem of \citet{Pollard:1990}, we have a new probability space and
\begin{equation}
\label{U-0}
\sup_{s\in T}\|U_3(s)-0\|\to 0,\quad  as \quad n\to \infty.
\end{equation}
By Lemma~\ref{lemA1} and Conditions 1-5, we can obtain
 \begin{equation}
 \label{SS3}
 \sup_{s\in T}\|S_{n,0}^{\ast -1}(t,\beta(t))-S_{0}^{\ast -1}(t,\beta(t))\|\to 0,\quad as \quad n\to\infty.
 \end{equation}
Then, by Lemma~\ref{lemA2} combined with \eqref{U-0} and \eqref{SS3}, we can derive, in probability,
\begin{equation}
\label{SM1}
n^{-1}(nh)^{1/2}\sum_{i=1}^{n}\{S_{n,0}^{\ast -1}(t,\beta(t))-S_{0}^{\ast -1}(t,\beta(t))\}dM_i(t)\to 0,\quad as \quad n\to\infty.
\end{equation}
which holds in the original probability space. And in analogy with the prove of $U_3(s)$, we can check that, in distribution,
\begin{equation}
\label{MSta}
(nh)^{1/2}\sum_{i=1}^{n}dM_i(t)/nS_{0}^{\ast}(t,\beta(t)) \to 0, \quad as \quad n\to\infty.
\end{equation}
Therefore, from \eqref{SM3}, \eqref{SM1} and \eqref{MSta}, we can obtain, in probability,
\begin{equation}
\label{SM2}
(nh)^{1/2}\sum_{i=1}^{n}dM_i(t)/nS_{n,0}^{\ast}(t,\beta(t))\to 0,\quad as \quad n\to\infty.
\end{equation}

For the first term of right-hand side of \eqref{Mu1}. Under Conditions 1-5 and Lemma~\ref{lemA1}, form \eqref{them1.1}, we obtain
\begin{equation}
\label{H1}
H_{n}(t,\beta^{\ast})\to -q_{1}(t)/q_{0}(t), \quad n\to \infty.
\end{equation}
By the assumption of $nh^{5}=o(1)$, from \eqref{cor1}, we can derive
\begin{equation}
\label{BB}
(nh)^{1/2}(\widehat{\beta}(t)-\beta(t))\to N(0, \nu_{0}\sigma_{1}^{-2}(t)\sigma_{2}(t)),\quad as \quad n\to\infty.
\end{equation}
Therefore, from \eqref{H1} and \eqref{BB}, we have 
\begin{equation}
\label{HB}
H_{n}(t,\beta^{\ast})(nh)^{1/2}(\widehat{\beta}(t)-\beta(t))\to N(\nu_{0}q_{0}^{-2}(t)q_{1}(t)\sigma_{1}^{-2}(t)\sigma_{2}(t)),\quad as \quad n\to\infty.
\end{equation}

Hence, form \eqref{Mu1}, \eqref{SM2} and \eqref{HB}, using Slutsky's theorem, we can obtain
\begin{equation}
\label{Mu2}
(nh)^{1/2}(\widehat{\mu}_{0}(t,\widehat{\beta}(t))-\mu_{0}(t))\to N(0,\Sigma_3(t)),\quad n\to\infty,
\end{equation}
where $\Sigma_3(t)=\nu_{0}q_{0}^{-2}(t)q_{1}(t)\sigma_{1}^{-2}(t)\sigma_{2}(t)$.
\end{proof}

\vspace*{-10pt}

\appendixthree
\section*{Appendix 3}

Here, we will show the simulation results about the local kernel estimators $\widehat{\beta}(t)$ with corresponding setting that $\beta(t)=\surd{t}$ and $\beta(t)$=0$\cdot$5(Beta($x/12,3,3$)+Beta($x/12,4,4$)), respectively, under sample sizes equal to 500. Those figures are displayed in the end of this paper.

\vspace*{-10pt}

\bibliographystyle{biometrika}
\bibliography{Reference}



\begin{figure}
\centering
\begin{minipage}[t]{0.425\linewidth}
\centering
\figurebox{13pc}{\linewidth}{}[be0.3.eps]
\end{minipage}
\begin{minipage}[t]{0.425\textwidth}
\centering
\figurebox{13pc}{\textwidth}{}[be0.5.eps]
\end{minipage}
\begin{minipage}[t]{0.425\textwidth}
\centering
\figurebox{13pc}{\textwidth}{}[se0.3.eps]
\end{minipage}
\begin{minipage}[t]{0.425\textwidth}
\centering
\figurebox{13pc}{\textwidth}{}[se0.5.eps]
\end{minipage}
\begin{minipage}[t]{0.425\textwidth}
\centering
\figurebox{13pc}{\textwidth}{}[cp0.3.eps]
\end{minipage}
\begin{minipage}[t]{0.425\textwidth}
\centering
\figurebox{13pc}{\textwidth}{}[cp0.5.eps]
\end{minipage}
\caption{(c1) and (c2): The true and the average of the local kernel estimator with bandwidth 0$\cdot$3 and 0$\cdot$5, respectively. (c3) and (c4): Comparison of empirical standard errors (ESE) and the estimated standard errors (MSE) for $\widehat{\beta}(t)$ with bandwidth 0$\cdot$3 and 0$\cdot$5, respectively; (c5) and (c6): Empirical coverage probabilities of the 95$\%$ confidence intervals for $\widehat{\beta}(t)$ with bandwidth 0$\cdot$3 and 0$\cdot$5, respectively.}
\label{fig4}
\end{figure}

\begin{figure}
\centering
\begin{minipage}[t]{0.425\linewidth}
\centering
\figurebox{13pc}{\linewidth}{}[be3.eps]
\end{minipage}
\begin{minipage}[t]{0.425\textwidth}
\centering
\figurebox{13pc}{\textwidth}{}[be5.eps]
\end{minipage}
\begin{minipage}[t]{0.425\textwidth}
\centering
\figurebox{13pc}{\textwidth}{}[se3.eps]
\end{minipage}
\begin{minipage}[t]{0.425\textwidth}
\centering
\figurebox{13pc}{\textwidth}{}[se5.eps]
\end{minipage}
\begin{minipage}[t]{0.425\textwidth}
\centering
\figurebox{13pc}{\textwidth}{}[cp3.eps]
\end{minipage}
\begin{minipage}[t]{0.425\textwidth}
\centering
\figurebox{13pc}{\textwidth}{}[cp5.eps]
\end{minipage}
\caption{(d1) and (d2): The true and the average of the local kernel estimator with bandwidth 0$\cdot$3 and 0$\cdot$5, respectively. (c3) and (c4): Comparison of empirical standard errors (ESE) and the estimated standard errors (MSE) of $\widehat{\beta}(t)$ with bandwidth 0$\cdot$3 and 0$\cdot$5, respectively; (d5) and (d6): Empirical coverage probabilities of the 95$\%$ confidence intervals for $\widehat{\beta}(t)$ with bandwidth 0$\cdot$3 and 0$\cdot$5, respectively.}
\label{fig5}
\end{figure}

\end{document}